\newcommand{\NN}{\mathbb{N}}
\newcommand{\RR}{\mathbb{R}}
\newcommand{\de}{\mathrm{d}}
\newcommand{\norm}[2]{\bigl\| #1 \bigl\|_{#2}}
\newcommand{\scpr}[2]{\langle #1,#2\rangle_X}
\newcommand{\Id}{\mathrm{Id}}
\newcommand{\argmin}{\operatornamewithlimits{argmin}}
\newcommand{\p}{\mathcal{P}}
\newcommand{\Lin}{\mathcal{L}(X,X^*)}
\newcommand{\keins}{\kappa_1}
\newcommand{\kzwei}{\kappa_2}
\newcommand{\ksum}{\kappa_1 + \kappa_2}
\newcommand\xrowht[2][0]{\addstackgap[.5\dimexpr#2\relax]{\vphantom{#1}}}
\begin{document}

\title{Second order semi-smooth Proximal Newton methods in Hilbert Spaces\thanks{This work was funded by the DFG SPP 1962: Non-smooth and Complementarity-based Distributed Parameter Systems -- Simulation and Hierarchical Optimization; Project number: SCHI 1379/6-1}}

\author{Bastian P\"otzl         \and
	Anton Schiela         \and
	Patrick Jaap         
}

\institute{Bastian P\"otzl \at
	University of Bayreuth, Chair of Applied Mathematics, Universit\"atsstraße 30, 95440 Bayreuth \\
	\email{bastian.poetzl@uni-bayreuth.de} \\
	ORCID iD: https://orcid.org/0000-0002-3578-6424
	\and
	Anton Schiela \at
	University of Bayreuth, Chair of Applied Mathematics, Universit\"atsstraße 30, 95440 Bayreuth \\
	\email{anton.schiela@uni-bayreuth.de}
	\and 
	Patrick Jaap \at
	Technische Universit\"at Dresden, Institut f\"ur Numerische Mathematik, Zellescher Weg 12-14, 01069 Dresden \\
	\email{patrick.jaap@tu-dresden.de}
}

\date{Received: date / Accepted: date}

\maketitle

\keywords{Non-smooth Optimization \and Optimization in Hilbert space \and Proximal Newton}
\subclass{49M15 \and 49M37}
	
\begin{abstract}
	We develop a globalized Proximal Newton method for composite and possibly non-convex minimization problems in Hilbert spaces. Additionally, we impose less restrictive assumptions on the composite objective functional considering differentiability and convexity than in existing theory. As far as differentiability of the smooth part of the objective function is concerned, we introduce the notion of second order semi-smoothness and discuss why it constitutes an adequate framework for our Proximal Newton method. However, both global convergence as well as local acceleration still pertain to hold in our scenario. Eventually, the convergence properties of our algorithm are displayed by solving a toy model problem in function space.
\end{abstract}

\section{Introduction} \label{sec:intro}
Subject of this work is to generalize the idea of Proximal Newton methods for composite objective functions to a Hilbert space setting, aiming for the efficient solution of non-convex, non-smooth variational problems. The optimization problem reads 
\begin{align}\label{eq:prob} 
\min_{x \in X} F(x) \coloneqq f(x) + g(x)
\end{align}
where $f: X \rightarrow \RR$ is assumed to be smooth in some adequate sense and $g: X \rightarrow \RR$ is possibly not. The domain of both $f$ and $g$ is given by a subset of an arbitrary Hilbert space $X$. 

Originally, Fukushima and Mine introduced the Proximal Gradient method in the Euclidean $\RR^n$ for optimization problems of the above form, cf. \cite{FUKUSHIMA1981}. More specifically, this early version of the Proximal Gradient method constitutes a special case of a procedure studied by Tseng and Yun, cf. \cite{Tseng2007}. Further research showed that variously defined line search techniques lead to global convergence of the algorithm even under appropriate inexactness conditions for the solutions of the subproblem for step computation, cf. for example \cite{Byrd2015,Fountoulakis2018,Ghanbari2017,Lee2019,Scheinberg2016,Stella2017}. Additionally, local acceleration results have been achieved by utilizing second order information of the smooth part close to optimal solutions of the original minimization problem.

Obviously, further assumptions on the form of the composite objective functional open the door to more specific adaptions of the solution algorithm. For example in \cite{Li2016,TranDinh2015,Dinh2013}, the authors assume convexity and self-concordance of the smooth part $f$ in order to employ damped Proximal Newton methods. Alternatively, reformulations of the original minimization problem can be useful. As a consequence, methods which have been proven to work for other problem classes can also be applied in our case. For example in  \cite{Chen2016,Chen2016a,Li2014} fixed point algorithms were employed or consider \cite{Argyriou2011} for a reformulation of \eqref{eq:prob} as a constrained problem.

A different point of view onto this class of problems was taken by Milzarek and Ulbrich in \cite{Milzarek2014}. For $g(x) \coloneqq \lambda \norm{x}{1}$ with $\lambda > 0$, they considered a semi-smooth Newton method with filter globalization which Milzarek later on generalized to work also for arbitrary convex functions for $g$, cf. \cite{Milzarek2016}. 

Recently, Kanzow and Lechner discussed a globalized, inexact and possibly non-convex Proximal Newton-type method in Euclidean space $\RR^n$, cf. \cite{Kanzow2020}. There, the algorithm resorted to Proximal Gradient steps in the case of insufficient descent together with a line-search procedure in order to achieve global convergence and cope with lacking convexity of the objective functional. 

The work of Lee and Saunders \cite{Lee2014} gives an instructive overview of a generic version of the Proximal Newton method as well as several convergence results.
Our contributions beyond \cite{Lee2014} can be summarized as follows: Most obviously, we generalize the Euclidean space setting to a Hilbert space one. Additionally, in \cite{Lee2014} only elliptic bilinear forms for the second order model are considered and the non-smooth part $g$ is required to be convex. We use a more general framework of convexity assumptions for the composite objective function $F$. Furthermore, we do not demand second order differentiability with Lipschitz-continuous second order derivative of the smooth part $f$ but instead settle for adequate semi-smoothness assumptions. We replace the simple line-search approach for globalization with a more sophisticated proximal arc-search method which additionally softens the convexity assumptions on the objective functional. Eventually, we establish a more refined version of the global convergence proof and also give a dual interpretation for the stopping criterion of the algorithm. To our knowledge, also the notion of second order semi-smoothness for $f$ is yet to appear in literature. On the other hand, our work here covers neither inexact nor Proximal Quasi-Newton methods.

An important practical aspect of splitting methods, such as Proximal Newton, is that the non-smooth part $g$ of the composite objective functional $F$ yields a proximity operator $\mathrm{prox}_g$ that can be evaluated easily. This is, for example, the case, if $g$ and also the employed scalar product have diagonal structure. Then the solution of the subproblem within the proximity operator can be computed cheaply in a componentwise fashion. In function space problems, in particular if Sobolev spaces are involved, it is known that instead of a diagonal structure, a multi-level structure should be used in order to reflect the topology of the function space properly. Diagonal proximal operators would suffer from mesh-dependent condition numbers. In our numerical computations we therefore employ non-smooth multi-grid techniques to compute the Proximal Newton steps, in particular Truncated Non-smooth Newton Multigrid Methods, cf. \cite{Graeser2018}.

Let us first specify the setting in which we will discuss the convergence properties of Proximal Newton methods in a real Hilbert space $(X,\langle\cdot,\cdot\rangle_X)$ with corresponding norm $\|v\|_X=\sqrt{\langle v,v\rangle_X}$ and dual space $X^*$. The Hilbert space structure of $X$ also gives us access to the Riesz-Isomorphism $\mathcal R : X \to X^*$, defined by $\mathcal Rx=\langle x,\cdot\rangle_X$, which satisfies $\norm{\mathcal{R} x}{X^*} = \norm{x}{X}$ for every $x\in X$. Since $\mathcal R$ is non-trivial in general, we will not identify $X$ and $X^*$. 

We will assume the smooth part of our objective functional $f:X \rightarrow \RR$ to be continuously differentiable with Lipschitz-continuous derivative $f':X\to X^*$, i.e., we can find some constant $L_f > 0$ such that for every $x,y \in X$ the estimate
\begin{align} \label{eq:Lipschitz}
\norm{f'(x) - f'(y)}{X^*} \leq L_f \norm{x-y}{X}
\end{align}
holds.

Next we will specify our assumptions on the second order model for $f$. 
In what follows, we will notationally identify the linear operators $H_x \in \Lin$ with the corresponding symmetric bilinear form $H_x : X \times X\to \RR$, and write $(H_x v)(w)=H_x(v,w)$, using the abbreviation $H_x(v)^2=H_x(v,v)$.  We will assume uniform boundedness of $H_x$ along the sequence $(x_k)$ of iterates:
\[
\exists M \in \RR : \|H_{x_k}\|_{\Lin} \le M \quad \forall k \in \NN.  
\]
In addition, along the sequence of iterates $x_k$ we assume a uniform bound of the form
\begin{align} \label{eq:kappa1}
\exists \keins \in \RR : H_{x_k} (v)^2 \coloneqq H_{x_k}(v,v) \geq \keins \norm{v}{X}^2 \quad \forall v\in X, k \in \NN. 
\end{align}
For $\keins > 0$ estimate \eqref{eq:kappa1} represents ellipticity of $H_x$ with constant $\keins$. When considering exact (and smooth) Proximal Newton methods, where $H_x$ is given by the second-order derivative of $f$ at some point $x \in X$, \eqref{eq:kappa1} is equivalent to $\keins$-strong convexity of $f$. In the case $\keins > 0$ we may also define an energy-norm and write:
\[
\norm{v}{H_x}^2 \coloneqq H_x(v,v).
\]
For most of the paper we may choose $H_x$ freely in the above framework. For fast local convergence, however, we will impose a semi-smoothness assumption, cf. \eqref{eq:semismooth}. Semi-smooth Newton methods in function space have been discussed, for example, in   \cite{Ulbrich2002,Ulbrich2011,Hintermueller2002,Schiela2006}.
Furthermore, in order to guarantee transition of our globalization scheme to fast local convergence, we suppose $f$ to suffice the notion of second order semi-smoothness (cf. Section~\ref{sec:soss}) which generalizes second order differentiability in our setting and the definition of which slightly differs from semi-smoothness of $f'$ in \eqref{eq:semismooth}. 

We assume that the non-smooth part $g$ is lower semi-continuous and satisfies a bound of the form
\begin{align}\label{eq:kappa2}
g(sx+(1-s)y) \leq sg(x) + (1-s)g(y) - \frac{\kzwei}{2}s(1-s)\norm{x-y}{X}^2
\end{align}
for all $x,y \in X$ and all $s \in [0,1]$ for some $\kzwei \in \RR$. For $\kzwei > 0$ estimate \eqref{eq:kappa2} represents $\kzwei$-strong convexity of $g$. It is known that $\kzwei$-strong convexity of $g$ implies that $g$ is bounded from below, its level-sets $L_\alpha g$ bounded for all $\alpha \in \RR$ and their diameter shrinks to $0$, if $\alpha \to \inf_{x\in X} g$. 
In the case of $\kzwei < 0$, $g$ is allowed to be non-convex in a limited way. 

The theory behind Proximal Newton methods and the respective convergence properties evolves around the convexity estimates stated in \eqref{eq:kappa1} and \eqref{eq:kappa2}. We will assign particular importance to the interplay of the convexity properties of $f$ and $g$, i.e., the sum $\ksum$ will continue to play an important part over the course of the present treatise. 

Let us now shortly outline the structure of our work: In Section~\ref{sec:local} we will consider undamped update steps computed as the solution of an adequately formulated subproblem. These can also be represented using (scaled) proximal mappings the definition and key properties of which we shortly address. Afterwards, local superlinear convergence of the Proximal Newton method is shown. In Section~\ref{sec:global} we present a modification of the aforementioned subproblem in order to damp update steps and globalize the Proximal Newton method. This enables the proof of optimality of all limit points of the sequence of iterates generated by our method. Section~\ref{sec:soss} concerns the introduction of second order semi-smoothness for $f$ and showcases how it helps to verify the admissibility of both full and damped update steps sufficiently close to optimal solutions in Section~\ref{sec:transition}. This in turn enables local fast convergence of our globalized method. In Section~\ref{sec:num} the performance of our algorithm is substantiated by numerical results.

As a start, we want to introduce the definition of undamped update steps and investigate the behavior of the ensuing Proximal Newton method close to optimal solutions of problem \eqref{eq:prob}.

\section{General Dual Proximal Mappings} \label{sec:prox}
We compute a full step for the Proximal Newton method at a current iterate $x \in X$ by solving the subproblem
\begin{align} \label{eq:fullstep}
\Delta x \coloneqq \argmin_{\delta x \in X} f'(x)\delta x + \frac 12 H_x (\delta x, \delta x) + g(x + \delta x) - g(x).
\end{align}
In this section $H_x$ denotes a general bilinear form, as introduced above. If a minimizer exists, we determine the next iterate via $x_+ \coloneqq x + \Delta x$. We will consider this update scheme and investigate its convergence properties close to optimal solutions, and in particular fast local convergence if $H_x$ is adequately chosen as a so-called Newton derivative from $\partial_N f'(x)$, also known as the generalized differential $\partial^* f'(x)$ in the sense of Chapter 3.2 in \cite{Ulbrich2011}.

\begin{proposition}
	If $\keins+\kzwei>0$, then \eqref{eq:fullstep} admits a unique solution. 
\end{proposition}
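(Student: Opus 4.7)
The plan is to recast the subproblem as minimization of a proper, lower semi-continuous, $\ksum$-strongly convex functional on $X$ and then invoke the direct method of the calculus of variations. Write
\[
\Phi(\delta x) := f'(x)\delta x + \tfrac{1}{2} H_x(\delta x, \delta x) + g(x+\delta x) - g(x);
\]
the task is to show $\Phi$ admits a unique minimizer.

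First, strong convexity would be checked term by term. The linear summand is affine, hence contributes $0$. For the quadratic summand, bilinearity and symmetry of $H_x$ give the polarization identity
\[
\tfrac{1}{2}H_x(su+(1-s)v)^2 = s\cdot\tfrac{1}{2}H_x(u)^2 + (1-s)\cdot\tfrac{1}{2}H_x(v)^2 - \tfrac{1}{2}s(1-s)H_x(u-v)^2,
\]
which, combined with \eqref{eq:kappa1}, is precisely a $\keins$-strong convexity estimate in the format of \eqref{eq:kappa2}. Applying \eqref{eq:kappa2} at the pair $x+u,x+v$ shows that $\delta x \mapsto g(x+\delta x)$ satisfies the analogous estimate with constant $\kzwei$. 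Summing the three contributions yields $\ksum$-strong convexity of $\Phi$. Lower semi-continuity is inherited from the lsc assumption on $g$ together with continuity of the first two summands (the linear term by $f'(x) \in X^*$, the quadratic term by $H_x \in \Lin$); convexity upgrades this to weak lower semi-continuity on $X$.

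The only non-routine step is coercivity, which I would establish as follows. Since $\Phi$ is proper, convex and lsc on the Hilbert space $X$, a standard Hahn-Banach separation argument applied to the epigraph yields a continuous affine minorant: there exist $\xi \in X^*$ and $c \in \RR$ with $\Phi(\delta x) \geq \xi(\delta x) + c$ for all $\delta x \in X$. Evaluating the strong convexity inequality at the pair $(0,\delta x)$ with $s = \tfrac{1}{2}$ and rearranging gives
\[
\Phi(\delta x) \geq 2\Phi\bigl(\tfrac{1}{2}\delta x\bigr) - \Phi(0) + \tfrac{\ksum}{4}\,\|\delta x\|_X^2 \geq \xi(\delta x) + 2c - \Phi(0) + \tfrac{\ksum}{4}\,\|\delta x\|_X^2,
\]
which diverges to $+\infty$ as $\|\delta x\|_X \to \infty$ thanks to $\ksum > 0$. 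Consequently any minimizing sequence is bounded; by Banach-Alaoglu it admits a weakly convergent subsequence whose weak limit is a minimizer by weak lower semi-continuity. Uniqueness follows from strict convexity, which is implied by strong convexity.
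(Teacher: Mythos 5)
Your proof is correct and follows the same route as the paper, which simply asserts that the functional is lower semi-continuous, strictly convex and radially unbounded under $\keins+\kzwei>0$ and then invokes existence and uniqueness of minimizers in a Hilbert space; you have merely filled in the details (the polarization identity giving $\ksum$-strong convexity, the affine-minorant coercivity argument, and the direct method). No gaps.
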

\begin{proof}
	By assumption, the functional to be minimized is lower semi-continuous, and $\keins+\kzwei>0$ implies that it is strictly convex as well as radially unbounded. Since $X$ is a Hilbert space a minimizer exists and is unique. \qed
\end{proof}
\begin{remark}
	Let us shortly elaborate on both constants $\keins$ and $\kzwei$ as well as the assumption $\ksum > 0$. While $\kzwei$ is a global convexity constant for $g$, $\keins$ is a purely local quantity which differs from iterate to iterate together with the corresponding second order bilinear form $H_{x_k}$. This has two immediate consequences: On the one hand, ellipticity of the second order bilinear forms can locally compensate for non-convexity of $g$ and on the other hand (global) convexity of $g$ enables us to locally use non-elliptic $H_x$ even close to optimal solutions of our minimization problem. Comparing these convexity assumptions to similar works on the topic, we recognize that the authors in both \cite{Lee2014} and \cite{Kanzow2020} require ellipticity of their $\nabla^2 f (x_*)$ in addition to convexity of $g$. In contrast, our $(\keins,\kzwei)$-formalism from above suitably quantifies the contribution to convexity of both $f$ and $g$. 
\end{remark}
For the following discussion we keep the assumption $\keins+\kzwei>0$. To introduce an adequate definition of a proximal mapping in Hilbert space we reformulate \eqref{eq:fullstep} directly for the updated iterate $x_+$ via
\begin{align} \label{eq:preproxmin}
x_+ = \argmin_{y \in X} f'(x)(y-x) + \frac 12 H_x(y-x,y-x) + g(y) - g(x) \, .
\end{align}
In the literature existence of a continuous inverse $H_x^{-1} : X^* \to X$ is frequently assumed, giving rise to a mapping $H_x^{-1}f': X \rightarrow X$. Then \eqref{eq:preproxmin}  can be rearranged to
\begin{align} \label{eq:updateLee}
x_+ = \argmin_{y \in X} g(y) + \frac 12 H_x(y- \big(x - H_x^{-1}f'(x)\big))^2 \, .
\end{align}
In \cite{Lee2014}, this form of the updated iterate is considered and the notion of a proximal mapping is introduced by 
\begin{align*}
\mathrm{prox}_g^H (x) \coloneqq \argmin_{y \in \RR^n} \, g(y) + \frac{1}{2} (y-x)^T H (y-x) = \argmin_{y \in \RR^n} \, g(y) + \frac{1}{2} \norm{y-x}{H}^2
\end{align*}
such that there \eqref{eq:updateLee} takes the form $x_+ =\mathrm{prox}_g^{H_x} \big(x - H_x^{-1}f'(x)\big)$.

However, in this work we want to follow a different, more direct approach towards proximal mappings which allows us to use the structure of the dual space $X^*$ more accurately and dispense with an invertibility assumption on $H_x$. In \cite{TranDinh2015} (scaled) proximal mappings are introduced for $X=\RR^n$ according to
\begin{align*}
\mathcal{P}_g^H : \RR^n \to \RR^n \, , \, \mathcal{P}_g^H (x) \coloneqq \argmin_{y \in \RR^n} \, g(y) + \frac{1}{2} y^T H y - x^T y \, .
\end{align*}
Observing that $x^T$ represents a dual element in $\RR^n$ here, we generalize this notion to the setting of Hilbert spaces and consider
\begin{align} \label{eq:proxmin}
\mathcal{P}_g^H : X^* \to X \, , \, \mathcal{P}_g^H (\varphi) \coloneqq \argmin_{y \in X} \, g(y) + \frac{1}{2} H(y,y) - \varphi(y),
\end{align}
obtaining a mapping from the dual space back to the primal space. 

With this definition in mind, \eqref{eq:preproxmin} can directly be rewritten as
\begin{align} \label{eq:proxupdate}
x_+ = \argmin_{y \in X} \, g(y) + \frac{1}{2} H_x(y)^2 - \big( H_x (x) - f'(x) \big)(y) = \mathcal{P}_g^{H_x} (H_x (x) - f'(x)) \,.
\end{align}
Our notion allows us to dispense with the use of the inverse $H_x^{-1}$, which would require in addition $\kappa_1>0$. 
We will refer to \eqref{eq:proxmin} as the direct or dual formulation of scaled proximal mappings.

First order conditions for the minimization problem posed in \eqref{eq:proxupdate} yield the equation 
\begin{align*}
\eta + H_x(x_+ - x) + f'(x) = 0
\end{align*}
in the dual space $X^*$ for some (Frech\'et-)subderivative $\eta \in \partial_F g (x_+)$ (if $g$ is convex, $\partial_F g$ coincides with the convex subdifferential $\partial g$, cf. \cite{Kruger2003}). As we rearrange this identity, one could formally write:
\begin{align*}
x_+ = \left(H_x + \partial_F g \right)^{-1}\left( H_x - f' \right) x \, .
\end{align*}
If $H_x$ is additionally invertible, this is equivalent to
\begin{align*}
x_+ = \left(\Id + H_x^{-1} \partial_F g \right)^{-1}\left( \Id - H_x^{-1}f' \right) x
\end{align*}
which once again substantiates the interpretation of proximal-type methods as forward-backward splitting algorithms. Note that in particular the subdifferential of $g$ is evaluated at the updated point $x_+$.

We can shift convexity properties of the respective parts of the composite objective functional by inserting adequate bilinear form terms. However, this procedure does not affect the sequence of iterates generated by the update formula from above:
\begin{lemma} \label{lem:Fschlange}
	Let $q:X \to \RR$ be a continuous quadratic function and denote its second derivative (which is independent of $x$) by $Q \coloneqq  q''(x) : X \to X^*$.  
	Consider the modified (but obviously equivalent) minimization problem
	\begin{align} \label{eq:modprob}
	\min_{x \in X} \tilde F(x) &\coloneqq \tilde f (x) + \tilde g (x)\\
	\tilde f(x) &\coloneqq  f(x) - q(x), \qquad \tilde g (x) \coloneqq  g(x) + q(x).
	\end{align}
	Then, the update steps computed via \eqref{eq:proxupdate} are identical for both problems \eqref{eq:prob} and \eqref{eq:modprob} if we choose $\tilde H_x = H_x - Q$ as the corresponding bilinear form.
\end{lemma}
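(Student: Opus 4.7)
The plan is to work directly at the level of the subproblem \eqref{eq:fullstep}, which is equivalent to the proximal-mapping formulation \eqref{eq:proxupdate}. For the modified data $\tilde f$, $\tilde g$, $\tilde H_x$ at a current iterate $x$, the update $\widetilde{\Delta x}$ is the argmin in $\delta x \in X$ of
\begin{align*}
\tilde f'(x)\,\delta x + \tfrac{1}{2}\tilde H_x(\delta x,\delta x) + \tilde g(x+\delta x) - \tilde g(x).
\end{align*}
My aim is to show that, as a function of $\delta x$, this expression coincides with the objective of the original subproblem \eqref{eq:fullstep}, which immediately forces equality of the argmin sets.

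The key ingredient is that, because $q:X\to\RR$ is a continuous quadratic function whose second derivative equals the constant operator $Q$, the second order Taylor expansion of $q$ around any point $x\in X$ is exact:
\begin{align*}
q(x+\delta x) = q(x) + q'(x)\,\delta x + \tfrac{1}{2}\, Q(\delta x,\delta x).
\end{align*}
This is the one non-routine step and the only place where the quadratic hypothesis on $q$ enters. In a general Banach/Hilbert setting one should note that $Q = q''(x)$ is indeed independent of $x$, which justifies using the same $Q$ in $\tilde H_x = H_x - Q$ at every iterate.

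Now I would substitute the definitions $\tilde f = f - q$, $\tilde g = g + q$, and $\tilde H_x = H_x - Q$ into the modified objective, writing $\tilde g(x+\delta x) - \tilde g(x) = g(x+\delta x) - g(x) + q(x+\delta x) - q(x)$ and applying the exact expansion above. The linear term $-q'(x)\delta x$ from $\tilde f'(x)\delta x$ cancels against the $+q'(x)\delta x$ that appears via the $q$-contribution to $\tilde g$, and the quadratic term $-\tfrac{1}{2}Q(\delta x,\delta x)$ coming from $\tilde H_x$ cancels against the $+\tfrac{1}{2}Q(\delta x,\delta x)$ produced by the same expansion. What remains is
\begin{align*}
f'(x)\,\delta x + \tfrac{1}{2}H_x(\delta x,\delta x) + g(x+\delta x) - g(x),
\end{align*}
i.e., precisely the objective of \eqref{eq:fullstep}. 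Hence the two subproblems have identical objectives, their argmin sets coincide, and $x_+ = x + \Delta x$ is the same in both formulations.

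Let me emphasize what is \emph{not} needed: no invertibility, ellipticity, or positivity of $H_x$ or $\tilde H_x$, no convexity of $g$ or $\tilde g$, and not even uniqueness of the minimizer. The main obstacle is therefore not really an obstacle at all; the content of the lemma is an algebraic identity made possible by exactness of the Taylor expansion of $q$, and the rest is bookkeeping.
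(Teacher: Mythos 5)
Your proof is correct and follows essentially the same route as the paper: a direct algebraic verification that the modified subproblem objective reduces to the original one, using the fact that the second order expansion of the quadratic $q$ is exact. The only (cosmetic) difference is that you work in the increment variable $\delta x$ and show the two objectives are literally identical pointwise, whereas the paper substitutes $q(y)=\tfrac12 Q(y)^2+ly+c$ in the variable $y=x+\delta x$ and collects terms into the common dual proximal-mapping form $\mathcal{P}_g^{H_x}(H_x(x)-f'(x))$.
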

\begin{remark}
	If we choose $q(x) \coloneqq \frac \kappa 2 \norm{x}{X}^2$ for some $\kappa \in \RR$, the modified quantities $\tilde H_x$ and $\tilde g$ suffice estimates \eqref{eq:kappa1} and \eqref{eq:kappa2} for $\tilde \kappa_1 = \kappa_1 - \kappa$ and $\tilde \kappa_2 = \kappa_2 + \kappa$. In particular, $\kappa_1 + \kappa_2 = \tilde \kappa_1 + \tilde \kappa_2$ remains unchanged and $\tilde g$ is $(\kappa + \kappa_2)$-strongly convex for $\kappa > -\kappa_2$.
\end{remark}
\begin{proof}
	The only claim which is not apparent is the identity of update steps. To this end, we consider the fundamental definition of the update step for problem \eqref{eq:modprob} at some $x \in X$ given by
	\begin{align*}
	\Delta \tilde x = \argmin_{\delta x \in X} \tilde f'(x)\delta x + \frac 12 \tilde H_x (\delta x)^2 + \tilde g(x + \delta x) - \tilde g(x)
	\end{align*}
	and consequently for $q(y) = \frac 12 Q (y)^2 + ly + c$ and $c \in \RR$ constant
	\begin{align*}
	\tilde x_+ &= \argmin_{y \in X} \big(f'(x) - q'(x)\big)(y-x) + \frac 12 \big(H_x - q''(x)\big)(y-x)^2 + g(y) + q(y) \\
	&= \argmin_{y \in X} \big(f'(x) - (Qx + l) \big)(y-x) + \frac 12 \big(H_x - Q\big)(y-x)^2 + g(y) + \frac 12 Q (y)^2 + ly \\
	&= \argmin_{y \in X} g(y) + \frac 12 H_x (y)^2 - \big( (H_x - Qx) - (f'(x)-Qx) \big)y \\
	&= \mathcal{P}_g^{H_x} (H_x (x) - f'(x)) = x_+
	\end{align*}
	which directly shows the asserted identity of update steps. \qed
\end{proof}
\begin{remark}
	If the bilinear form for update step computation is chosen as $H_x \in \partial_N f'(x)$ and thereby as $\tilde H_x \in \partial_N \tilde f'(x)$ in the modified case, we have $\tilde H_x = H_x - Q$, automatically.
\end{remark}

\section{Regularity and Fast Local Convergence}\label{sec:local}

The representation of the updated iterate as the image of a scaled proximal mapping in \eqref{eq:proxupdate} will turn out to be very useful in what follows which is why we dedicate the next two propositions to the properties of scaled proximal mappings in our scenario. The first proposition generalizes the assertions of the so called second prox theorem, cf. e.g. \cite{Beck2017}, to our notion of proximal mappings.
\begin{proposition}\label{prop:scndprox}
	Let $H$ and $g$ satisfy the assumptions \eqref{eq:kappa1} and \eqref{eq:kappa2} with $\ksum > 0$. Then for any $\varphi \in X^*$ the image of the corresponding proximal mapping $u \coloneqq \p_g^H(\varphi)$ satisfies the estimate
	\begin{align*}
	\big[ \varphi - H(u) \big](\xi - u) \leq g(\xi) - g(u) - \frac{\kzwei}{2}\norm{\xi - u}{X}^2
	\end{align*}
	for all $\xi \in X$.
\end{proposition}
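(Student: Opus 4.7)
The plan is to exploit the optimality of $u = \mathcal{P}_g^H(\varphi)$ against convex combinations of $u$ with an arbitrary competitor $\xi$, then pass to a limit that isolates a directional derivative inequality. Because the bilinear form $H$ and the linear term $\varphi$ are smooth, the only place where the non-smoothness of $g$ bites is the use of the weak convexity assumption \eqref{eq:kappa2}, which is exactly what allows $\kappa_2$ (positive or negative) to appear on the right-hand side.

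Concretely, I would fix $\xi \in X$ and, for $s \in (0,1]$, set $y_s \coloneqq (1-s)u + s\xi = u + s(\xi - u)$. Since $u$ minimizes $\psi(y) \coloneqq g(y) + \tfrac{1}{2}H(y,y) - \varphi(y)$, we have $0 \leq \psi(y_s) - \psi(u)$. I would then expand each piece separately: the quadratic part gives exactly
\[
\tfrac{1}{2}H(y_s,y_s) - \tfrac{1}{2}H(u,u) = sH(u,\xi-u) + \tfrac{s^2}{2}H(\xi-u,\xi-u),
\]
the linear part gives $-\varphi(y_s) + \varphi(u) = -s\varphi(\xi-u)$, and the weak convexity bound \eqref{eq:kappa2} applied to $g$ at $y_s$ gives
\[
g(y_s) - g(u) \leq s\bigl(g(\xi) - g(u)\bigr) - \tfrac{\kappa_2}{2}s(1-s)\norm{\xi-u}{X}^2.
\]

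Summing these three contributions, dividing by $s > 0$, and letting $s \to 0^+$ makes the term $\tfrac{s}{2}H(\xi-u,\xi-u)$ vanish while the factor $(1-s)$ in front of $\tfrac{\kappa_2}{2}\norm{\xi-u}{X}^2$ tends to $1$. Rearranging the resulting inequality then yields exactly
\[
\bigl[\varphi - H(u)\bigr](\xi - u) \leq g(\xi) - g(u) - \tfrac{\kappa_2}{2}\norm{\xi-u}{X}^2,
\]
as claimed. The only subtlety I anticipate is making sure the assumption $\kappa_1 + \kappa_2 > 0$ is really not needed for this inequality itself (it was only used for existence/uniqueness of $u$); once $u$ is fixed, the argument above is purely a variational comparison and goes through for any $\kappa_2 \in \mathbb{R}$ satisfying \eqref{eq:kappa2}.
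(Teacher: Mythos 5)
Your proposal is correct and follows essentially the same route as the paper: the paper's first-order condition $\varphi \in \partial\bigl(g + \tfrac12 H(\cdot,\cdot)\bigr)(u)$ is just the global minimality comparison $\psi(u) \le \psi(y)$ that you invoke directly, and both arguments then insert $y_s = s\xi + (1-s)u$, apply \eqref{eq:kappa2}, divide by $s$, and let $s \to 0^+$. Your closing observation that $\keins + \kzwei > 0$ is only needed for existence of $u$, not for the inequality itself, is also accurate.
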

\begin{proof}
	The proof of the estimate above is an easy consequence of the characterization of the convex subdifferential of $g_H \coloneqq g +\frac 12 H(\cdot, \cdot)$ and \eqref{eq:kappa2}. First order conditions of the minimization problem in \eqref{eq:proxmin} yield
	\begin{align*}
	\varphi \in \partial \big(g + \frac 12 H(\cdot,\cdot) \big)(u) = \partial g_H (u)
	\end{align*}
	where $\partial$ denotes the convex subdifferential since in particular $g_H$ is convex due to the positivity of the sum $\ksum$. This inclusion directly implies the estimate
	\begin{align*}
	\varphi (y-u) + g(u) + \frac 12 H(u,u) \leq g(y) + \frac 12 H(y,y)
	\end{align*}
	for arbitrary $y \in X$ which is equivalent to
	\begin{align*}
	\big[ \varphi - \frac 12 H(y+u) \big](y-u) \leq g(y) - g(u) \, .
	\end{align*}
	As pointed out before, now we want to take advantage of the convexity assumptions on $g$ according to \eqref{eq:kappa2}. To this end, we insert $y = y(s) \coloneqq s\xi + (1-s)u$ above for $s \in ]0,1]$ and use \eqref{eq:kappa2} on the right-hand side. This yields
	\begin{align*}
	s\big[ \varphi - H(u) - \frac s2 H(\xi - u) \big](\xi - u) \leq s\big[g(\xi) - g(u) - \frac{\kzwei}{2}(1-s)\norm{\xi - u}{X}^2\big]
	\end{align*}
	where we now divide by $s \neq 0$ and subsequently evaluate the limit of $s$ to zero. This procedure provides us with the asserted estimate for $\xi$, $\varphi$ and $u$ as specified above. \qed
\end{proof}
The inequality from Proposition~\ref{prop:scndprox} can be used in order to prove several useful continuity results for general scaled proximal mappings in Hilbert spaces. However, for our purposes it suffices to assert and verify the following result, which generalizes non-expansivity of proximal mappings in Euclidean space to our setting. It plays a similar role as boundedness of the inverse of the derivative in Newton's method. 
\begin{corollary}[Regularity of the Prox-Mapping] \label{cor:proxcontinuity}
	Let $H$ and $g$ satisfy the assumptions \eqref{eq:kappa1} and \eqref{eq:kappa2} with $\ksum > 0$. Then, for all $\varphi_1,\varphi_2 \in X^*$ the following Lipschitz-estimate holds:
	\begin{align*}
	\norm{\p_g^H(\varphi_1) - \p_g^H(\varphi_2)}{X} \leq \frac{1}{\ksum} \norm{\varphi_1 - \varphi_2}{X^*}
	\end{align*} 
\end{corollary}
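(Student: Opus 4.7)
The plan is to apply Proposition~\ref{prop:scndprox} twice, once for each argument $\varphi_i$, and combine the two resulting inequalities in a standard monotonicity-style argument. Write $u_i := \mathcal{P}_g^H(\varphi_i)$ for $i=1,2$. The first application, with $\varphi = \varphi_1$, $u = u_1$, and test point $\xi = u_2$, gives
\[
[\varphi_1 - H(u_1)](u_2 - u_1) \leq g(u_2) - g(u_1) - \frac{\kappa_2}{2}\|u_2 - u_1\|_X^2 .
\]
The second, with roles of $1$ and $2$ swapped, yields the symmetric inequality with $(u_1 - u_2)$ in place of $(u_2 - u_1)$.

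Next I would add the two inequalities. The $g$-terms on the right cancel, leaving $-\kappa_2\|u_1-u_2\|_X^2$; on the left the $H$-terms combine into $H(u_2-u_1,u_2-u_1) = H(u_2-u_1)^2$, which by the ellipticity bound \eqref{eq:kappa1} is at least $\kappa_1\|u_2-u_1\|_X^2$. Rearranging gives
\[
(\kappa_1 + \kappa_2)\|u_1 - u_2\|_X^2 \leq (\varphi_1 - \varphi_2)(u_1 - u_2).
\]

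Finally I would apply the duality estimate $(\varphi_1 - \varphi_2)(u_1 - u_2) \leq \|\varphi_1 - \varphi_2\|_{X^*}\|u_1 - u_2\|_X$ and, assuming $u_1 \neq u_2$ (the other case being trivial), divide by $(\kappa_1+\kappa_2)\|u_1-u_2\|_X$, which is positive by the hypothesis $\kappa_1+\kappa_2>0$. This produces exactly the claimed Lipschitz constant $1/(\kappa_1+\kappa_2)$.

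There is no real obstacle here; the proof is essentially the classical firm-nonexpansiveness argument for prox operators, adapted to the dual Hilbert-space formulation \eqref{eq:proxmin}. The only subtle point is making sure the two structural constants enter correctly: $\kappa_1$ must come from the ellipticity of $H$ via the cross-term $H(u_2-u_1)^2$, while $\kappa_2$ must come from the strong-convexity-type bound on $g$ already built into Proposition~\ref{prop:scndprox}. Both then add up linearly, reflecting the splitting of convexity between $f$ and $g$ emphasized throughout the paper.
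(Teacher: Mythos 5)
Your argument is correct and is essentially identical to the paper's own proof: both apply Proposition~\ref{prop:scndprox} twice with the test points $\xi=u_2$ and $\xi=u_1$, add the resulting inequalities so the $g$-terms cancel, invoke the ellipticity bound \eqref{eq:kappa1} together with the duality pairing, and divide by $(\kappa_1+\kappa_2)\|u_1-u_2\|_X$. No gaps.
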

\begin{proof}
	Let us choose $H$ and $\varphi_1,\varphi_2$ as stated above. According to Proposition~\ref{prop:scndprox}, the first order conditions for the respective minimization problems yield the inequalities 
	\begin{align}
	(\varphi_1 - H(u_1))(u_2- u_1)\leq g(u_2) - g(u_1) - \frac{\kzwei}{2}\norm{u_2 - u_1}{X}^2 \label{eq:veinsfertig} \\
	(\varphi_2 - H(u_2))(u_1- u_2)\leq g(u_1) - g(u_2) - \frac{\kzwei}{2}\norm{u_1 - u_2}{X}^2  \label{eq:vzweifertig}
	\end{align}
	since we can choose $\xi \coloneqq u_2$ or $\xi \coloneqq u_1$ respectively. Now, we add \eqref{eq:veinsfertig} and \eqref{eq:vzweifertig} which yields
	\begin{align*}
	(\varphi_2 - \varphi_1 + H(u_1-u_2))(u_1-u_2) \leq - \kzwei \norm{u_1 - u_2}{X}^2 \, .
	\end{align*}
	As we rearrange this inequality we obtain
	\begin{align*}
	H(u_1-u_2)^2 + \kzwei \norm{u_1 - u_2}{X}^2 \leq (\varphi_1 - \varphi_2)(u_1-u_2) \leq \norm{\varphi_1 - \varphi_2}{X^*}\norm{u_1-u_2}{X}
	\end{align*}
	and eventually assumption \eqref{eq:kappa1} on $H$ yields the assertion of the proposition. \qed
\end{proof}
Even though the above continuity result for proximal mappings will turn out to be an important tool for the proof of local acceleration of the Proximal Newton method, we still have to deduce some crucial properties of the full update step $\Delta x$. These will help us to characterize optimal solutions of \eqref{eq:prob} as fixed points of the method and then verify local acceleration afterwards.
\begin{lemma} \label{lem:descentdirfull}
	The undamped update steps computed via \eqref{eq:fullstep} are descent directions of the composite objective functional, i.e., the following estimate holds:
	\begin{align*}
	F(x+s\Delta x) \leq F(x) - s(\ksum)\norm{\Delta x}{X}^2 + O(s^2) \, .
	\end{align*}
\end{lemma}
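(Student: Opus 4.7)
The plan is to combine three ingredients: a sharp estimate on $f'(x)\Delta x + g(x+\Delta x) - g(x)$ coming from the characterization of $\Delta x$ as the image of a scaled proximal mapping, the strong-convexity-type inequality \eqref{eq:kappa2} applied along the segment from $x$ to $x+\Delta x$, and a first-order Taylor expansion of $f$ based on \eqref{eq:Lipschitz}.

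First I would invoke the identity $x+\Delta x = \p_g^{H_x}(H_x(x) - f'(x))$ from \eqref{eq:proxupdate} together with Proposition~\ref{prop:scndprox}, specialised to $\varphi = H_x(x) - f'(x)$, $u = x+\Delta x$, and test point $\xi = x$. After expanding $[\varphi - H_x(u)](\xi - u)$ and invoking the lower bound \eqref{eq:kappa1} on $H_x$, this gives
\[
f'(x)\Delta x + g(x+\Delta x) - g(x) \leq -H_x(\Delta x)^2 - \frac{\kzwei}{2}\norm{\Delta x}{X}^2 \leq -\left(\keins + \frac{\kzwei}{2}\right)\norm{\Delta x}{X}^2.
\]
This is the step where I expect the main subtlety: merely testing the subproblem at $\delta x = 0$ produces a bound weaker by a factor of two in the $\keins$-contribution and misses the $\kzwei/2$ term altogether, so it would fall short of the sharp constant $\ksum$ claimed in the statement. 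Using the full strength of Proposition~\ref{prop:scndprox}, which already incorporates \eqref{eq:kappa2} at the subgradient level, is therefore essential.

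Next I would exploit that $x+s\Delta x = s(x+\Delta x) + (1-s)x$ for $s\in[0,1]$ and apply \eqref{eq:kappa2} directly along this segment to obtain
\[
g(x+s\Delta x) - g(x) \leq s\bigl[g(x+\Delta x) - g(x)\bigr] - \frac{\kzwei}{2}s(1-s)\norm{\Delta x}{X}^2.
\]
Complementing this with the standard Taylor bound $f(x+s\Delta x) - f(x) = s f'(x)\Delta x + O(s^2)$, in which the quadratic remainder is controlled via \eqref{eq:Lipschitz}, I would add the two estimates and substitute the inequality from the previous paragraph. The $-\kzwei/2$ term from the proximal inequality and the $-\kzwei s(1-s)/2$ term from convexity along the segment combine to contribute $-s\kzwei\norm{\Delta x}{X}^2$ at leading order, while the $-sH_x(\Delta x)^2$ piece yields $-s\keins\norm{\Delta x}{X}^2$; the surplus $s^2$ pieces are absorbed into the $O(s^2)$ remainder. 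This produces the desired descent estimate $F(x+s\Delta x) \leq F(x) - s(\ksum)\norm{\Delta x}{X}^2 + O(s^2)$.
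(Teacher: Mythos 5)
Your proof is correct and follows essentially the same route as the paper's: the key bound $f'(x)\Delta x + g(x+\Delta x)-g(x) \leq -H_x(\Delta x)^2 - \frac{\kzwei}{2}\norm{\Delta x}{X}^2$ obtained from Proposition~\ref{prop:scndprox} with $\xi = x$, combined with \eqref{eq:kappa2} along the segment $x+s\Delta x$ and a Taylor expansion of $f$, is exactly the paper's argument. The only difference is cosmetic bookkeeping of where the two $\kzwei/2$ contributions are collected, and your side remark about why testing the subproblem at $\delta x=0$ would be too weak correctly identifies the essential role of Proposition~\ref{prop:scndprox}.
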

\begin{proof}
	Since $f$ is assumed to be continuously differentiable and $g$ suffices the estimate \eqref{eq:kappa2}, we can deduce the following bound on the composite objective functional:
	\begin{align} \label{eq:descentdirtaylor}
	\begin{split}
		F(x+s\Delta x) &\leq f(x) + sf'(x)\Delta x + O(s^2) \\
		&\quad + sg(x+\Delta x) + (1-s)g(x) - \frac{\kzwei}{2}s(1-s)\norm{\Delta x}{X}^2  \\
		&\leq F(x) + s(f'(x)\Delta x + g(x+ \Delta x) - g(x) - \frac{\kzwei}{2}\norm{\Delta x}{X}^2) + O(s^2)
	\end{split}
	\end{align}
	Let us now deduce an estimate for the term in brackets on the right-hand side of \eqref{eq:descentdirtaylor}. To this end, we remember the proximal mapping representation of updated iterates in \eqref{eq:proxupdate} and consider the corresponding estimate from Proposition~\ref{prop:scndprox} for $\xi \coloneqq x$ which is given by
	\begin{align*}
	\big[ H_x(x) - f'(x) - H_x(x_+) \big](x-x_+) \leq g(x) - g(x_+) - \frac{\kzwei}{2}\norm{x_+ - x}{X}^2
	\end{align*}
	or equivalently 
	\begin{align*}
	f'(x)\Delta x + g(x+\Delta x) - g(x) - \frac{\kzwei}{2}\norm{\Delta x}{X}^2 &\leq - H_x(\Delta x)^2 - \kzwei \norm{\Delta x}{X}^2 \\ &\leq - (\ksum)\norm{\Delta x}{X}^2
	\end{align*}
	which we insert into \eqref{eq:descentdirtaylor} and directly obtain the asserted inequality. Note that over the course of this section we assume the positivity of the sum $\keins+\kzwei$ which indeed implies from above that $\Delta x$ is a descent direction. \qed
\end{proof}
As mentioned beforehand, this directly enables a more insightful characterization of optimal solutions of the composite minimization problem.
\begin{proposition} \label{prop:optsolcharfull}
	Consider $f$ continuously differentiable with Lipschitz derivative as well as $H \in \Lin$ which satisfies \eqref{eq:kappa1} with $\ksum > 0$ and $\kzwei$ from \eqref{eq:kappa2} for $g$. Then, the search direction $\Delta x_*$ according to \eqref{eq:fullstep} is zero at every local minimizer $x_* \in X$ of problem \eqref{eq:prob}. In particular, we obtain the fixed point equation 
	\begin{align*}
	x_* = \p^{H}_g \big(H (x_*) - f'(x_*) \big) \, .
	\end{align*}
\end{proposition}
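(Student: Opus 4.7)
The plan is to use Lemma~\ref{lem:descentdirfull} as a drop-in substitute for the classical first-order necessary condition. Concretely, I would argue by contradiction: suppose $\Delta x_* \neq 0$ at the local minimizer $x_*$. Applying the descent estimate of Lemma~\ref{lem:descentdirfull} at $x_*$ yields
\[
F(x_* + s\Delta x_*) \leq F(x_*) - s(\ksum)\norm{\Delta x_*}{X}^2 + O(s^2).
\]
Since by assumption $\ksum > 0$ and $\norm{\Delta x_*}{X}^2 > 0$, the linear-in-$s$ term dominates the quadratic remainder for all sufficiently small $s > 0$. Hence there exists $s_0 > 0$ with $F(x_* + s\Delta x_*) < F(x_*)$ for $s \in (0, s_0]$, which contradicts local minimality of $x_*$. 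Consequently $\Delta x_* = 0$.

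From $\Delta x_* = 0$ we obtain $x_{*,+} = x_* + \Delta x_* = x_*$, and the proximal-mapping representation \eqref{eq:proxupdate} of the updated iterate immediately gives the fixed-point identity
\[
x_* = \p_g^H\bigl(H(x_*) - f'(x_*)\bigr).
\]

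I do not expect a genuine obstacle here; the only subtlety is to make sure the $O(s^2)$ constant in Lemma~\ref{lem:descentdirfull} is indeed independent of $s$ near $0$ (which it is, being generated by the Taylor remainder of $f$ and the fixed quadratic term from \eqref{eq:kappa2}), so that choosing $s$ small enough really does force strict descent. One should also note that the hypotheses of the proposition — continuous differentiability of $f$, the bound \eqref{eq:kappa1} on $H$, the bound \eqref{eq:kappa2} on $g$, and $\ksum>0$ — are precisely what Lemma~\ref{lem:descentdirfull} and the existence/uniqueness of $\Delta x_*$ require, so the argument goes through verbatim.
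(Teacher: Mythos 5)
Your proposal is correct and follows essentially the same route as the paper: the paper's proof likewise invokes Lemma~\ref{lem:descentdirfull} to conclude that a nonzero step would produce strict descent near $x_*$, contradicting local minimality, and then reads off the fixed-point identity from \eqref{eq:proxupdate}. Your additional remarks on the uniformity of the $O(s^2)$ term and the matching of hypotheses are sound but not a departure from the paper's argument.
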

\begin{proof}
	If $x_*$ is a local minimizer, $F(x_*+s\Delta x) \geq F(x_*)$ for sufficiently small $s>0$. By Lemma~\ref{lem:descentdirfull} this implies $\Delta x=0$.  \qed
\end{proof}

Having in mind these properties of update steps and optimal solutions in addition to the continuity result for scaled proximal mappings from Proposition~\ref{cor:proxcontinuity}, we can now prove the local acceleration result for our Proximal Newton method with undamped steps near optimal solutions.

For the following we require $f'$ to be semi-smooth near an optimal solution $x_*$ of our problem \eqref{eq:prob} with respect to $H_x$, i.e., the following approximation property holds:
\begin{align} \label{eq:semismooth}
\norm{f'(x_*) - f'(x) - H_{x}(x_* - x)}{X^*} = o\big( \norm{x-x_*}{X} \big) \, .
\end{align}
As pointed out before, adequate definitions of $H_x$ can be given via the Newton derivative $H_x \in \partial_N f'(x)$ for Lipschitz-continuous operators in finite dimension as well as for corresponding superposition operators, cf. Chapter 3.2 in \cite{Ulbrich2011}.

\begin{theorem}[Fast Local Convergence] \label{thm:localacc}
	Suppose that $x_* \in X$ is an optimal solution of problem \eqref{eq:prob}. Consider two consecutive iterates $x, x_+ \in X$ which have been generated by the update scheme from above and are close to $x^*$. Furthermore, suppose that \eqref{eq:semismooth} holds for $H_x$ in addition to the assumptions from the introductory section with $\ksum > 0$. Then we obtain:
	\begin{align*}
		\norm{x_+ - x_*}{X} = o\big( \norm{x-x_*}{X} \big) \quad \text{in the limit of} \quad x \to x^* .
	\end{align*}
\end{theorem}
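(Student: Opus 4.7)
My plan is to exploit the fixed-point characterization of $x_*$ from Proposition~\ref{prop:optsolcharfull} together with the non-expansivity of the scaled proximal mapping from Corollary~\ref{cor:proxcontinuity}: both $x_+$ and $x_*$ are images of the \emph{same} mapping $\p_g^{H_x}$ applied to two different dual elements, so the error $\norm{x_+ - x_*}{X}$ is controlled by the norm of the difference of those dual elements, which in turn is precisely the semi-smooth Taylor remainder appearing in \eqref{eq:semismooth}.

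Concretely, the steps are as follows. First, I would apply Proposition~\ref{prop:optsolcharfull} with the choice $H \coloneqq H_x$ (admissible by the standing assumption $\ksum > 0$) to obtain the identity $x_* = \p_g^{H_x}(H_x(x_*) - f'(x_*))$, and read off $x_+ = \p_g^{H_x}(H_x(x) - f'(x))$ from \eqref{eq:proxupdate}. Second, I would feed both representations into Corollary~\ref{cor:proxcontinuity} to obtain
\begin{align*}
\norm{x_+ - x_*}{X} \leq \frac{1}{\ksum}\, \norm{f'(x_*) - f'(x) - H_x(x_* - x)}{X^*}.
\end{align*}
Third, I would invoke the semi-smoothness assumption \eqref{eq:semismooth} on $f'$ at $x_*$ with respect to $H_x$, which states exactly that the right-hand side is of order $o(\norm{x - x_*}{X})$, and the desired estimate follows.

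I do not expect any real obstacle here; the argument is essentially the prox-analogue of the classical fast local convergence proof for semi-smooth Newton methods, with Corollary~\ref{cor:proxcontinuity} playing the role usually played by uniform boundedness of an inverse derivative. The only two minor points worth double-checking are that Proposition~\ref{prop:optsolcharfull} is indeed applicable with the iterate-dependent bilinear form $H_x$ rather than a fixed $H$, and that the Lipschitz constant $1/(\ksum)$ produced by Corollary~\ref{cor:proxcontinuity} stays uniformly bounded along the sequence so that the little-$o$ behavior on the right actually transfers to the left-hand side; both are immediate from the uniform assumptions \eqref{eq:kappa1} and \eqref{eq:kappa2} imposed in the introduction.
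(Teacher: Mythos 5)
Your proposal is correct and follows exactly the same route as the paper's own proof: represent $x_+$ and $x_*$ as images of the same scaled proximal mapping $\p_g^{H_x}$ via \eqref{eq:proxupdate} and Proposition~\ref{prop:optsolcharfull}, apply the Lipschitz estimate of Corollary~\ref{cor:proxcontinuity} with constant $1/(\ksum)$, and absorb the resulting dual-norm difference into the semi-smoothness remainder \eqref{eq:semismooth}. The two caveats you flag (applicability with the iterate-dependent $H_x$ and uniformity of the Lipschitz constant) are indeed handled by the uniform assumptions \eqref{eq:kappa1} and \eqref{eq:kappa2}, exactly as the paper implicitly does.
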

\begin{proof}
	Consider the proximal mapping representations deduced above for both the updated iterate $x_+$ in \eqref{eq:proxupdate} and for the optimal solution $x_*$ in Proposition~\ref{prop:optsolcharfull} via 
	\begin{align*}
	x_+ = x + \Delta x = \p^{H_x}_g \big(H_x (x) - f'(x) \big) \quad \text{and} \quad x_* = \p^{H_x}_g \big(H_x (x_*) - f'(x_*) \big) \, .
	\end{align*}
	Next, we directly take advantage of these identities together with the continuity result for scaled proximal mappings from Proposition~\ref{cor:proxcontinuity} in order to deduce the estimate
	\begin{align*}
	\norm{x_+ - x_*}{X} &= \norm{\p^{H_x}_g \big(H_x (x) - f'(x) \big) - \p^{H_x}_g \big(H_x (x_*) - f'(x_*) \big)}{X} \\
	&\leq \frac{1}{\ksum} \norm{H_x (x) - f'(x) - (H_x (x_*) - f'(x_*))}{X^*} \\
	&= \frac{1}{\ksum} \norm{H_x(x - x_*) - (f'(x) - f'(x_*))}{X^*} \\
	&= o\big( \norm{x-x_*}{X} \big)
	\end{align*}
	where in the last step also the semi-smoothness of $f'$ played a crucial role. This directly verifies the asserted local acceleration result. \qed
\end{proof}

In particular, this implies local superlinear convergence of our Proximal Newton method if we can additionally verify global convergence to an optimal solution. Note that even for the local acceleration result, ellipticity of $H_x \in \partial_N f'(x)$ does not necessarily have to be demanded. Also here, all that matters is strong convexity of the composite functional. This might be surprising since what actually accelerates the method is the second order information on the (possibly non-convex) but differentiable part $f$ with semi-smooth derivative $f'$. As a consequence, this means that the (strong) convexity of $g$ can not only contribute to the well-definedness of update steps as solutions of \eqref{eq:fullstep} but also to the local acceleration of our algorithm.

The main reason for this generalization of the local acceleration result is our slightly generalized notion of proximal mappings. In particular, we did not deduce (firm) non-expansivity in the scaled norm as for example in \cite{Lee2014} but also there took advantage of the strong convexity of the composite objective functional in the form of assumptions \eqref{eq:kappa1} and \eqref{eq:kappa2} with $\ksum > 0$.

Note that for the above results to hold it was crucial that the current iterate $x$ is already close to an optimal solution of problem \eqref{eq:prob} which is why over the course of the next section we want to address one possibility to globalize our Proximal Newton method. We will see that eventually we will be in the position to use undamped update steps for the computation of iterates and thereby benefit from the local acceleration result in Theorem~\ref{thm:localacc}.

\section{Globalization via an additional norm term} \label{sec:global}
Let us consider the following modification of \eqref{eq:fullstep} and define the damped update step at a current iterate $x$ as a minimizer of the following modified model functional:
\[
\lambda_\omega(\delta x) \coloneqq  f'(x)\delta x + \frac 12 H_x (\delta x, \delta x) + \frac{\omega}{2}\norm{\delta x}{X}^2 + g(x+\delta x) - g(x) \, .
\]
As a consequence, we define
\begin{align} \label{eq:dampedstep}
\Delta x (\omega) \coloneqq \argmin_{\delta x \in X} \lambda_\omega(\delta x) \, .
\end{align}
Here $\omega>0$ is an algorithmic parameter that can be used to achieve global convergence.
Setting $\tilde H \coloneqq  H_x+\omega \mathcal R$ with the Riesz-isomorphism $\mathcal R:X \to X^*$ we observe that \eqref{eq:dampedstep} is of the form~\eqref{eq:fullstep} with $\tilde \kappa_1 = \keins+\omega$, so that the existence and regularity results of the previous sections apply.  

The updated iterate then takes the form $x_+ (\omega) \coloneqq x + \Delta x (\omega)$. Apparently, the update step in \eqref{eq:dampedstep} is well defined if $\omega + \ksum > 0$. Consequently, for what follows, we only consider $\omega > -(\ksum)$ in order to guarantee unique solvability of the update step subproblem. The full update steps from \eqref{eq:fullstep} are here damped along a curve in $X$ which is parametrized by the regularization parameter $\omega \in ]-(\ksum),\infty[$. 

However, note that here the Hilbert space structure of $X$ is also important for the strong convexity of functions of the form $g +\frac{\omega}{2}\norm{\cdot}{X}^2$ with $g$ as in \eqref{eq:kappa2} for arbitrary $\kappa_2 \in \RR$. In a general Banach space setting, we can not assume additional norm terms to compensate disadvantageous convexity assumptions, cf. \cite{Beck2017}, Remark 5.18].

Let us now take a look at how we can rearrange the subproblem for finding an updated iterate by using the scalar product $\scpr{\cdot}{\cdot}$ as well as the Riesz-Isomorphism $\mathcal R$:
\begin{align}
\begin{split}
x_+ (\omega) &= \argmin_{y \in X} f'(x)(y-x) + \frac 12 H_x (y-x, y-x) + g(y) - g(x) + \frac{\omega}{2}\norm{y-x}{X}^2  \\
&= \argmin_{y \in X} g(y) + f'(x) y + \frac 12 H_x (y)^2 - H_x(x,y) + \frac{\omega}{2} \norm{y}{X}^2 - \omega \scpr{x}{y}  \\
&= \argmin_{y \in X} g(y) + \frac 12 \big(H_x + \omega \mathcal{R} \big) (y)^2 - \big( H_x (x) + \omega \mathcal{R} x - f'(x) \big)y  \\
&= \p_{g}^{H_x + \omega \mathcal{R}} \big( (H_x + \omega \mathcal{R}) x - f'(x) \big) \, .
\end{split}
\label{eq:dampprox}
\end{align}
Note that $H_x + \omega \mathcal{R}: X \times X \to \RR$ satisfies \eqref{eq:kappa1} with constant $(\kappa_1 + \omega)$ such that the combination of $g$ and $H_x + \omega \mathcal{R}$ still suffices the requirements for the results from Proposition~\ref{prop:scndprox} for all $\omega > -(\ksum)$. Additionally, the results of Lemma~\ref{lem:Fschlange} apparently also hold in the globalized case.

The formulation of updated iterates via the above scaled proximal mapping enables us to establish some helpful properties of the damped update steps $\Delta x(\omega)$.
\begin{proposition} \label{prop:dampedstepprop}
	Under the assumptions \eqref{eq:kappa1} for $H_x$ and \eqref{eq:kappa2} for $g$ the inequality
	\begin{align*}
	f'(x)\Delta x(\omega) + g(x+\Delta x(\omega)) - g(x) \leq -\big( \frac{\kzwei}{2} + \omega \big) \norm{\Delta x(\omega)}{X}^2 - H_x(\Delta x(\omega))^2
	\end{align*}
	holds for the update step $\Delta x(\omega)$ as defined in \eqref{eq:dampedstep} and arbitrary $-(\ksum) < \omega < \infty$.
\end{proposition}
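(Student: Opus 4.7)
The plan is to apply Proposition~\ref{prop:scndprox} directly to the shifted bilinear form $\tilde H \coloneqq H_x + \omega \riesz$, using the proximal representation of the damped step derived in \eqref{eq:dampprox}.

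First I would check that the hypotheses of Proposition~\ref{prop:scndprox} are met by the pair $(\tilde H, g)$. The bilinear form $\tilde H$ satisfies \eqref{eq:kappa1} with constant $\tilde\kappa_1 = \keins + \omega$, since $\riesz(v)^2 = \norm{v}{X}^2$. The non-smooth part $g$ is unchanged and still satisfies \eqref{eq:kappa2} with $\kzwei$. By the standing hypothesis $\omega > -(\ksum)$, we obtain $\tilde\kappa_1 + \kzwei = \keins + \omega + \kzwei > 0$, so Proposition~\ref{prop:scndprox} is applicable.

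Next I would apply the inequality from Proposition~\ref{prop:scndprox} with $\varphi \coloneqq (H_x + \omega\riesz)x - f'(x)$ and $u \coloneqq x_+(\omega) = \p_g^{\tilde H}(\varphi)$, choosing the test point $\xi \coloneqq x$. Written out, this gives
\begin{align*}
\big[(H_x + \omega\riesz)x - f'(x) - (H_x + \omega\riesz)x_+(\omega)\big](x - x_+(\omega)) \leq g(x) - g(x_+(\omega)) - \frac{\kzwei}{2}\norm{x_+(\omega) - x}{X}^2.
\end{align*}
Then I would substitute $\Delta x(\omega) = x_+(\omega) - x$ on both sides. The left-hand side collapses to $(H_x + \omega\riesz)(\Delta x(\omega))^2 + f'(x)\Delta x(\omega)$, and using $\riesz(\Delta x(\omega))^2 = \norm{\Delta x(\omega)}{X}^2$, this equals $H_x(\Delta x(\omega))^2 + \omega\norm{\Delta x(\omega)}{X}^2 + f'(x)\Delta x(\omega)$.

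Rearranging the resulting inequality yields
\begin{align*}
f'(x)\Delta x(\omega) + g(x+\Delta x(\omega)) - g(x) \leq -\Big(\frac{\kzwei}{2} + \omega\Big)\norm{\Delta x(\omega)}{X}^2 - H_x(\Delta x(\omega))^2,
\end{align*}
which is exactly the claimed bound. There is no real obstacle here: the proof is essentially a bookkeeping exercise built entirely on Proposition~\ref{prop:scndprox} and on the identity $\riesz(v)(v) = \norm{v}{X}^2$. The only point that deserves care is the admissibility of $\omega$, i.e., confirming that the shifted pair $(\tilde H, g)$ lies in the regime $\tilde\kappa_1 + \kzwei > 0$ where Proposition~\ref{prop:scndprox} is valid.
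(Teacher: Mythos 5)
Your proposal is correct and follows essentially the same route as the paper: the paper likewise invokes Proposition~\ref{prop:scndprox} with $H = H_x + \omega\riesz$, $\varphi = (H_x+\omega\riesz)x - f'(x)$ and $\xi = x$, so that $u = x_+(\omega)$, and then rearranges using $\riesz(v)(v) = \norm{v}{X}^2$. Your explicit check that $\tilde\kappa_1 + \kzwei = \keins + \omega + \kzwei > 0$ is the same admissibility observation the paper records just after \eqref{eq:dampprox}.
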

\begin{proof}
	The proof here follows along the same lines as the derivation of the auxiliary estimate for the bracket term in the proof of Lemma~\ref{lem:descentdirfull}. Due to the structure of the update formula in \eqref{eq:dampprox} we can take advantage of the estimate from Proposition~\ref{prop:scndprox} with $\varphi = (H_x + \omega \mathcal{R}) x - f'(x)$, $H = H_x + \omega \mathcal{R}$ and $\xi = x$ which yields $u = \p_{g}^H (x) = x_+$ and thereby 
	\begin{align*}
	H_x (\Delta x (\omega))^2 + \omega \norm{\Delta x (\omega)}{X}^2 + f'(x)\Delta x (\omega) \leq g(x) - g(x_+(\omega)) - \frac{\kappa_2}{2} \norm{\Delta x (\omega)}{X}^2 \, .
	\end{align*} 
	This inequality is equivalent to the asserted estimate. \qed
\end{proof}
With the above estimate for damped update steps at hand, let us now formulate a criterion for sufficient decrease which will help us to verify a global convergence result of our Proximal Newton method. We call a value of the regularization parameter $\omega > -(\ksum)$ admissible for sufficient decrease if the inequality
\begin{align}\label{eq:lambda}
F(x_+(\omega)) \leq F(x) + \gamma\lambda_\omega(\Delta x(\omega))
\end{align}
for some prescribed $\gamma \in ]0,1[$ is satisfied. We may interpret $\lambda_\omega(\Delta x(\omega))$ as a predicted decrease and rewrite the condition \eqref{eq:lambda} as follows:
\[
\frac{F(x_+(\omega)) -F(x)}{\lambda_\omega (\Delta x(\omega))} \ge \gamma \, .
\]
This is the classical ratio of actual decrease and predicted decrease which is often used for trust-region algorithms. Before now trying to verify that the descent criterion in \eqref{eq:lambda} is fulfilled for sufficiently large values of $\omega$, we note that the assertion in Proposition~\ref{prop:dampedstepprop} implies the insightful estimate
\begin{equation} \label{eq:lambdanorm}
\begin{split}
\lambda_\omega(\Delta x(\omega)) &\leq -\big( \frac{\kzwei}{2} + \omega \big) \norm{\Delta x(\omega)}{X}^2 - \frac 12 H_x\big(\Delta x(\omega)\big)^2+ \frac \omega2 \|\Delta x(\omega)\|_X^2\\
&\leq - \frac12\big( \omega +  \ksum\big)\norm{\Delta x(\omega)}{X}^2
\end{split}
\end{equation}
which yields that once the criterion is satisfied, update steps unequal to zero provide real descent in the composite objective function $F$ according to
\begin{equation}\label{eq:decrease}
F(x_+(\omega))-F(x) \le -\frac{\gamma}{2} \big( \omega +  \ksum\big)\norm{\Delta x(\omega)}{X}^2 \, . 
\end{equation}
Let us now take a look at the existence of sufficiently large values of the regularization parameter $\omega$. Here, the Lipschitz-continuity of $f'$ comes into play for the first time.
\begin{lemma} \label{lem:sufflarge}
	For $f$, $H_x$ and $g$ as above the criterion for sufficient descent introduced via \eqref{eq:lambda} is satisfied for $\gamma \in ]0,1[$ if $\omega$ satisfies 
	\[
	\omega \ge \frac{L_f-\kappa_1}{1-\gamma}-(\ksum) \, .
	\]
\end{lemma}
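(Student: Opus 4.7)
The plan is to mount a standard Armijo-type estimate combining the Lipschitz descent lemma for $f$ with the predicted-decrease bound \eqref{eq:lambdanorm} for $\lambda_\omega(\Delta x(\omega))$; the admissibility threshold for $\omega$ then emerges from a linear inequality in $\omega$.

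First, I would invoke \eqref{eq:Lipschitz} along the segment from $x$ to $x+\Delta x(\omega)$ to get the standard quadratic upper bound $f(x+\Delta x(\omega)) - f(x) \leq f'(x)\Delta x(\omega) + \tfrac{L_f}{2}\|\Delta x(\omega)\|_X^2$, add $g(x+\Delta x(\omega))-g(x)$ on both sides, and subtract the defining expression of $\lambda_\omega(\Delta x(\omega))$. Absorbing the remaining $H_x$-term via the ellipticity estimate $H_x(\Delta x(\omega))^2 \geq \kappa_1 \|\Delta x(\omega)\|_X^2$ from \eqref{eq:kappa1}, this produces the clean Lipschitz-surplus bound
\begin{align*}
F(x_+(\omega)) - F(x) - \lambda_\omega(\Delta x(\omega)) \leq \tfrac{1}{2}(L_f - \omega - \kappa_1)\|\Delta x(\omega)\|_X^2.
\end{align*}

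Second, I would decompose the target quantity of the criterion \eqref{eq:lambda} as
\begin{align*}
F(x_+(\omega)) - F(x) - \gamma\lambda_\omega(\Delta x(\omega)) = \bigl[F(x_+(\omega)) - F(x) - \lambda_\omega(\Delta x(\omega))\bigr] + (1-\gamma)\lambda_\omega(\Delta x(\omega)),
\end{align*}
so that the first bracket is already controlled by the previous display. For the remaining term I would invoke \eqref{eq:lambdanorm}, which is applicable because the admissibility $\omega > -(\kappa_1+\kappa_2)$ ensures $\omega+\kappa_1+\kappa_2 > 0$ and $1-\gamma > 0$, giving a non-positive bound majorizing $(1-\gamma)\lambda_\omega(\Delta x(\omega))$ by $-\tfrac{1-\gamma}{2}(\omega+\kappa_1+\kappa_2)\|\Delta x(\omega)\|_X^2$. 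Summing both contributions reduces the descent criterion to non-positivity of a single coefficient times $\|\Delta x(\omega)\|_X^2$, which is a linear inequality in $\omega$ (with the case $\Delta x(\omega) = 0$ being trivial).

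The hard part will be the final bookkeeping in that linear inequality: the $-\omega$-contribution from the Lipschitz surplus has to combine with the $(1-\gamma)$-weighted $\omega$-contribution from \eqref{eq:lambdanorm} so that, after isolating $\omega$, the condition reads exactly $(1-\gamma)(\omega+\kappa_1+\kappa_2) \geq L_f - \kappa_1$, which rearranges to the displayed threshold $\omega \geq (L_f-\kappa_1)/(1-\gamma) - (\kappa_1+\kappa_2)$. Apart from this rearrangement, the argument relies only on the Lipschitz descent lemma, the ellipticity estimate \eqref{eq:kappa1}, and the predicted-decrease bound \eqref{eq:lambdanorm}.
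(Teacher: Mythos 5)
Your proposal is correct and follows essentially the same route as the paper: the quadratic Lipschitz upper bound for $f$, the ellipticity estimate \eqref{eq:kappa1}, and the predicted-decrease bound \eqref{eq:lambdanorm} are combined in exactly the decomposition $F(x_+(\omega))-F(x)-\gamma\lambda_\omega(\Delta x(\omega)) = [\text{Lipschitz surplus}] + (1-\gamma)\lambda_\omega(\Delta x(\omega))$ that the paper's chain of inequalities implements. The only cosmetic difference is that you retain the extra $-\tfrac{\omega}{2}\|\Delta x(\omega)\|_X^2$ in the surplus (the paper discards it as nonpositive for $\omega \ge 0$), so your final linear inequality in $\omega$ is slightly weaker than --- and for $\omega \ge 0$ implied by --- the stated threshold, rather than literally ``reading exactly'' as stated.
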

\begin{proof}
	By our lower bound on $\omega$ and \eqref{eq:lambdanorm} we obtain:
	\[
	\frac{L_f - \keins}{2}\|\Delta x(\omega)\|^2_X \le \frac{1-\gamma}{2}(\omega+\ksum)\|\Delta x(\omega)\|^2_X \le -(1-\gamma)\lambda_\omega(\Delta x(\omega)) \, . 
	\]
	The Lipschitz-continuity of $f'$ directly yields the estimate
	\begin{align*}
	f(x_+(\omega)) = f(x+\Delta x(\omega)) \leq f(x) + f'(x)\Delta x(\omega) + \frac{L_f}{2}\norm{\Delta x(\omega)}{X}^2
	\end{align*}
	from where we immediately obtain an estimate for the descent in the composite objective functional via
	\begin{align*}
	F(x_+(\omega)) - F(x) &\leq f'(x)\Delta x(\omega) + \frac{L_f}{2}\norm{\Delta x(\omega)}{X}^2 + g(x_+(\omega)) - g(x) \\
	&\leq \lambda_\omega (\Delta x(\omega)) + \frac{L_f - \keins}{2}\norm{\Delta x(\omega)}{X}^2\\
	&\leq \lambda_\omega (\Delta x(\omega))-(1-\gamma)\lambda_\omega(\Delta x(\omega))=\gamma\lambda_\omega(\Delta x(\omega)) \, .
	\end{align*}
	This estimate is equivalent to \eqref{eq:lambda} and thereby concludes the proof of the assertion. \qed
\end{proof}

Additionally, for global convergence, it turns out that we have to guarantee that
\[
\lambda_{\omega_k}(\Delta x(\omega_k)) \to 0 \mbox{ implies } \|\Delta x(\omega_k)\|_X \to 0. 
\]
A simple way to achieve this is to impose the following restriction:
\begin{align} \label{eq:lambdaMbound}
\|\Delta x(\omega_k)\|_X^2 \le -\overline M \lambda_{\omega_k}(\Delta x(\omega_k))
\end{align}
for some prescribed upper bound $\overline M$. Due to \eqref{eq:lambdanorm} this can be achieved for a sufficiently large choice of $\omega_k$. All in all, this results in the following algorithm: \\
\begin{algorithm}[H] \label{alg:proxnewton}
	\KwData{Starting point $x_0 \in X$, sufficient decrease parameter $\gamma \in ]0,1[$, $\varepsilon > 0$ for stopping criterion}
	Initialization: $k=0$\;
	\While{$(1+\omega_k)\norm{\Delta x_k(\omega_k)}{X} \geq \varepsilon$}{
		Compute a trial step $\Delta x_k (\omega_k)$ according to \eqref{eq:dampedstep}\;
		\eIf{bound \eqref{eq:lambdaMbound} and sufficient descent criterion \eqref{eq:lambda} are satisfied}{
			Update current iterate to $x_{k+1} \leftarrow x_k + \Delta x_k(\omega_k)$\;
			Decrease $\omega_{k}$ to some $\omega_{k+1} < \omega_k$ for next iteration\;
			Update $k \leftarrow k+1$ \;
		}{
			Increase $\omega_k$ appropriately\;
		}
	}
	\caption{Second order semi-smooth Proximal Newton algorithm damped according to \eqref{eq:dampedstep}}
\end{algorithm}

Now that we have formulated the algorithm and can be sure that we can always damp update steps sufficiently such that they yield descent according to \eqref{eq:lambda}, we will verify the stationarity of limit points of the sequence of iterates generated by Algorithm~\ref{alg:proxnewton}. To this end, we will first prove that the norm of the corresponding update steps converges to zero along the sequence of iterates. 
\begin{lemma} \label{lem:dxtozero}
	Let $(x_k) \subset X$ be the sequence generated by the Proximal Newton method globalized via \eqref{eq:dampedstep} for admissible values of the regularization parameter $\omega_k$ starting at any $x_0 \in \mathrm{dom}g$. Then either $F(x_k) \to -\infty$ or $\lambda_{\omega_k}(\Delta x_k(\omega_k))$ and $\norm{\Delta x_k(\omega_k)}{X}$ converge to zero for $k \to \infty$.
\end{lemma}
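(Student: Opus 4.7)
The plan is to dichotomize on whether $F$ is bounded from below along the iterates. In the case $F(x_k) \not\to -\infty$, I would combine the sufficient descent criterion~\eqref{eq:lambda} with the norm bound~\eqref{eq:lambdaMbound} to get a summable decrease, and then conclude convergence to zero of the predicted decrease $\lambda_{\omega_k}(\Delta x_k(\omega_k))$ and of the step norm $\|\Delta x_k(\omega_k)\|_X$.

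More concretely, first I would observe that every accepted step satisfies
\[
F(x_{k+1}) - F(x_k) \le \gamma\,\lambda_{\omega_k}(\Delta x_k(\omega_k)) \le 0,
\]
where the non-positivity follows from Proposition~\ref{prop:dampedstepprop} together with the admissibility range $\omega_k > -(\keins+\kzwei)$, cf.\ the refined bound~\eqref{eq:lambdanorm}. Hence $(F(x_k))$ is monotonically non-increasing. If it is not bounded from below, then $F(x_k) \to -\infty$, which is the first alternative of the lemma.

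In the remaining case, monotonicity and boundedness from below yield convergence of $(F(x_k))$ to some finite limit, so that $F(x_k) - F(x_{k+1}) \to 0$. Rearranging~\eqref{eq:lambda} then gives
\[
0 \le -\lambda_{\omega_k}(\Delta x_k(\omega_k)) \le \frac{1}{\gamma}\bigl( F(x_k) - F(x_{k+1}) \bigr) \longrightarrow 0,
\]
which proves $\lambda_{\omega_k}(\Delta x_k(\omega_k)) \to 0$. Finally, I would invoke the algorithmic norm bound~\eqref{eq:lambdaMbound}, namely $\|\Delta x_k(\omega_k)\|_X^2 \le -\overline M \,\lambda_{\omega_k}(\Delta x_k(\omega_k))$, to transfer this convergence to the step norms and obtain $\|\Delta x_k(\omega_k)\|_X \to 0$.

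The main obstacle in this argument is essentially bookkeeping: one must ensure that the algorithm indeed produces an infinite sequence of accepted iterates with admissible $\omega_k$, so that both \eqref{eq:lambda} and \eqref{eq:lambdaMbound} apply at every index $k$. This is guaranteed by Lemma~\ref{lem:sufflarge} (for the sufficient descent part) and by~\eqref{eq:lambdanorm} (which makes the ratio $\|\Delta x_k(\omega_k)\|_X^2 / (-\lambda_{\omega_k}(\Delta x_k(\omega_k)))$ arbitrarily small for large $\omega_k$, thereby enforcing \eqref{eq:lambdaMbound}). Once these admissibility statements are in place, the proof reduces to the telescoping argument sketched above.
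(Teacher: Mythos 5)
Your argument is correct and follows essentially the same route as the paper: monotone decrease of $F(x_k)$ from the acceptance criterion \eqref{eq:lambda} combined with \eqref{eq:lambdanorm}, the dichotomy between $F(x_k)\to-\infty$ and convergence of $F(x_k)$, telescoping to get $\lambda_{\omega_k}(\Delta x_k(\omega_k))\to 0$, and then passing to the step norms. If anything, your final step via the enforced bound \eqref{eq:lambdaMbound} is slightly more careful than the paper's one-line appeal to $\omega_k+\keins+\kzwei>0$, since that quantity need not a priori be bounded away from zero.
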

\begin{proof}
	By \eqref{eq:decrease} the sequence $F(x_k)$ is monotonically decreasing. Thus, either $F(x_k) \to -\infty$ or $F(x_k) \to \underline F$ for some $\underline F\in \RR$ and thus in particular $F(x_{k})-F(x_{k+1})\to 0$. Since $\gamma > 0$, also $\lambda_{\omega_k}(\Delta x(\omega))\to 0$. Since, by assumption, $\omega_k +\ksum > 0$ this implies $\norm{\Delta x_k(\omega_k)}{X} \to 0$.\qed
\end{proof}
If we take a look at the optimality conditions for the step computation in \eqref{eq:dampedstep} at $x_+(\omega)$, we obtain
\begin{align*}
(H_x+ \omega \mathcal{R}) x - f'(x) \in \partial_F g_\omega^{H_x} (x_+(\omega))
\end{align*}
with the Frech\'et-subdifferential of $g_\omega^{H_x}:X\to \RR, y \mapsto g(y) + \frac 12 H_x(y)^2 + \frac{\omega}{2}\norm{y}{X}^2$ on the right-hand side. This directly yields the existence of some $\eta \in \partial_F g(x_+(\omega))$ such that
\begin{align} \label{eq:subdiffdist}
\eta + f'(x_+(\omega)) = r_x(\Delta x(\omega)) \quad \text{with} \quad r_x(v)\coloneqq f'(x+v)-f'(x)- \big(H_x + \omega \mathcal{R}\big)v \, .
\end{align}
This implies the estimate: 
\begin{align*}
\mathrm{dist}(\partial_F F (x_k),0) = \mathrm{dist}(f'(x_k) + \partial_F g(x_k),0) \leq \norm{r_{x_k}(\Delta x_k(\omega_k))}{X^*}
\end{align*} 
Thus, by Lemma~\ref{lem:dxtozero} and
\begin{align*}
\|r_{x_k}(\Delta x_k(\omega_k))\|_{X^*} \leq \big(L_f + \norm{H_{x_k}}{\Lin} + \omega_k\big)\norm{\Delta x_k (\omega_k)}{X}
\end{align*}
we obtain
\[
\mathrm{dist}(\partial_F F (x_k),0) \to 0
\]
as long as $L_f< \infty$ exists, $\norm{H_{x_k}}{\Lin} \le M$ is bounded, and $\omega_k$ is bounded. The latter can be guaranteed via Lemma~\ref{lem:sufflarge} if the ``appropriate increase'' of $\omega_k$ is done by no more than a fixed factor $\rho >1$.  

\begin{remark}
	With some additional technical effort, the assumption on Lipschitz-continuity of $f'$ could be relaxed to a uniform continuity assumption. 
\end{remark}

Observe that we can indeed interpret $\norm{\Delta x_k(\omega_k)}{X} \leq \varepsilon$ as a condition for the optimality of our the subsequent iterate up to some prescribed accuracy. However, small step norms $\norm{\Delta x_k(\omega_k)}{X}$ can also occur due to very large values of the damping parameter $\omega_k$ as a consequence of which the algorithm would stop even though the sequence of iterates is not even close to an optimal solution of the problem. In order to rule out this inconvenient case, we consider the scaled version $(1+\omega_k)\norm{\Delta x_k(\omega_k)}{X}$ as the stopping criterion in Algorithm~\ref{alg:proxnewton}.

Now we are in the position to discuss subsequential convergence of our algorithm to a stationary point. In the following, we will assume throughout that $F(x_k)$ is bounded from below.  We start with the case of convergence in norm:
\begin{theorem} \label{thm:strongstat}
	Under the assumptions explained in the introductory section, all accumulation points $\bar x$ (in norm) of the sequence of iterates $(x_k)$ generated by the Proximal Newton method globalized via \eqref{eq:dampedstep} are stationary points of problem \eqref{eq:prob}.
\end{theorem}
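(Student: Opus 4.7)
My plan is to extract a norm-convergent subsequence, insert the first-order characterization \eqref{eq:subdiffdist} of damped steps into the limit, and exploit the weak convexity of $g$ to pass the subdifferential inclusion through. Since $F(x_k)$ is monotonically decreasing by \eqref{eq:decrease} and bounded from below, Lemma~\ref{lem:dxtozero} gives $\norm{\Delta x_k(\omega_k)}{X} \to 0$, so any subsequence $x_{k_j} \to \bar x$ in norm also satisfies $x_{k_j+1} \to \bar x$. I moreover need $(\omega_{k_j})$ to stay bounded: Lemma~\ref{lem:sufflarge} together with \eqref{eq:lambdanorm} provides a threshold $\omega^*$ above which both \eqref{eq:lambda} and \eqref{eq:lambdaMbound} hold automatically, and the algorithmic rule of inflating $\omega_k$ by at most a fixed factor $\rho > 1$ therefore yields $\omega_k \le \max\{\omega_0,\rho\omega^*\}$ uniformly. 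Evaluating \eqref{eq:subdiffdist} at each index $k_j$ produces $\eta_{k_j} \in \partial_F g(x_{k_j+1})$ with $\eta_{k_j} = r_{x_{k_j}}(\Delta x_{k_j}(\omega_{k_j})) - f'(x_{k_j+1})$. The remainder estimate $\norm{r_{x_k}(v)}{X^*} \le (L_f + M + \omega_k)\norm{v}{X}$, the step-norm convergence and the uniform bound on $\omega_{k_j}$ force the remainder to zero in $X^*$, while Lipschitz continuity of $f'$ sends $f'(x_{k_j+1}) \to f'(\bar x)$. Hence $\eta_{k_j} \to -f'(\bar x)$ in $X^*$.

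The main obstacle is to transfer the inclusion $\eta_{k_j} \in \partial_F g(x_{k_j+1})$ to the limit, since the Fr\'echet subdifferential does not generally have a closed graph. I would circumvent this by absorbing the possibly negative convexity constant $\kzwei$ into a smooth quadratic perturbation: set $\mu := \max\{0,-\kzwei\}$ and $h(x) := g(x) + \tfrac{\mu}{2}\norm{x}{X}^2$. Estimate \eqref{eq:kappa2} then guarantees that $h$ is convex and lower semi-continuous, and since $\tfrac{\mu}{2}\norm{\cdot}{X}^2$ is smooth with derivative $\mu\mathcal R(\cdot)$, the Fr\'echet subdifferential satisfies $\partial_F g(x) = \partial h(x) - \mu \mathcal R x$. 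Consequently $\tilde \eta_{k_j} := \eta_{k_j} + \mu \mathcal R x_{k_j+1} \in \partial h(x_{k_j+1})$, and $\tilde\eta_{k_j} \to -f'(\bar x) + \mu \mathcal R \bar x =: \tilde\eta$ in $X^*$. The convex subgradient inequality $h(z) \ge h(x_{k_j+1}) + \tilde\eta_{k_j}(z - x_{k_j+1})$ holds for every $z \in X$; taking $\liminf$ and using lower semi-continuity of $h$ on the right yields $h(z) \ge h(\bar x) + \tilde\eta(z-\bar x)$, i.e., $\tilde\eta \in \partial h(\bar x)$ and equivalently $-f'(\bar x) \in \partial_F g(\bar x)$.

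Combining these ingredients gives $0 \in f'(\bar x) + \partial_F g(\bar x) \subset \partial_F F(\bar x)$, which is the asserted stationarity of $\bar x$ for \eqref{eq:prob}. The delicate parts are the uniform control of $\omega_k$, needed to suppress the remainder $r_{x_k}$, and the closure step for $\partial_F g$, where the weak convexity assumption \eqref{eq:kappa2} is indispensable for reducing the argument to the closed-graph property of the convex subdifferential of $h$.
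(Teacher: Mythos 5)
Your argument is correct and follows essentially the same route as the paper's proof: both convexify $g$ by adding a quadratic term (the paper via Lemma~\ref{lem:Fschlange} with $\tilde g = g+q$, you via $h = g + \tfrac{\mu}{2}\norm{\cdot}{X}^2$ applied directly to \eqref{eq:subdiffdist}), drive the remainder $r_{x_k}$ to zero using the step-norm convergence and the uniform bound on $\omega_k$, and then pass the convex subgradient inequality to the limit using lower semi-continuity and norm convergence of $x_{k_j+1}\to\bar x$. The only difference is the bookkeeping of where the quadratic shift is introduced, which does not change the substance of the proof.
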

\begin{proof}
	Let us consider a modified version of our minimization problem as in \eqref{eq:modprob} in Lemma~\ref{lem:Fschlange} and choose $q(x) = \frac 12 Q(x)^2$ for $Q: X \times X \to \RR$ such that $\tilde g = g + q$ is (strongly) convex on its domain. 
	
	This is always possible by \eqref{eq:kappa2}. According to Lemma~\ref{lem:Fschlange}, the sequence of iterates remains unchanged and step computation takes the form
	\begin{align*}
	x_{k+1} = \tilde x_{k+1} = \argmin_{y \in X} \tilde g (y) + \frac 12 \big( H_{x_k} - Q + \omega \mathcal{R} \big)(y)^2 - \big((H_{x_k} + \omega_k \mathcal{R})x_k - f'(x_k) \big)y
	\end{align*}
	with first order optimality conditions
	\begin{align*}
	H_{x_k}(x_k) + \omega_k \mathcal{R}x_k - f'(x_k) \in \partial \tilde g (x_{k+1}) + \big( H_{x_k} - Q + \omega \mathcal{R} \big)(x_{k+1})
	\end{align*}
	where $\partial \tilde g (x_{k+1})$ denotes the convex subdifferential of $\tilde g$ at $x_{k+1}$. Consequently, we know that there exists some $\tilde \eta_k \in \partial \tilde g (x_{k+1})$ such that 
	\begin{align*}
	\tilde \eta_k + \big( f'(x_{k+1}) - Q x_{k+1} \big) = \tilde r_{x_k}\big(\Delta x_k (\omega_k)\big)
	\end{align*}
	with the remainder term on the right-hand side given by
	\begin{align*}
		\tilde r_x(v)\coloneqq f'(x+v)-f'(x)- \big(H_x + \omega \mathcal{R}\big)v
	\end{align*}
	holds. As before, the remainder term $\tilde r_{x_k}\big(\Delta x_k (\omega_k)\big) = r_{x_k}\big(\Delta x_k (\omega_k)\big)$ tends to zero for $k \to \infty$, i.e., we have $\tilde \eta \coloneqq \lim_{k \to \infty} \tilde \eta_k = -f'(\bar x) + Q \bar x$. The definition of the convex subdifferential $\partial \tilde g$ together with the lower semi-continuity of $\tilde g$ directly yields
	\begin{align*}
	\tilde g (u) - \tilde g(\bar x) &= g(u) + \frac 12 Q(u)^2 - g(\bar x) - \frac 12 Q(\bar x)^2 \\
	&\geq g(u) + \frac 12 Q(u)^2 - \liminf_{k \to \infty} g(x_k) - \lim_{k \to \infty} \frac 12 Q(x_k)^2 \\
	&= \liminf_{k \to \infty} \tilde g(u) - \tilde g(x_k) \\
	&\geq \liminf_{k \to \infty} \tilde \eta_k (u- x_k) = \lim_{k \to \infty} \tilde \eta_k (u- x_k) = \tilde \eta (u-\bar x)
	\end{align*}
	for any $u \in X$ which proves the inclusion $\tilde \eta \in \partial \tilde g(\bar x)$. The evaluation of the latter limit expression can easily be retraced by splitting
	\begin{align} \label{eq:weaklim}
	\tilde \eta_k (u- x_k) = \tilde \eta_k (u-\bar x) + (\tilde \eta_k - \tilde \eta)(\bar x- x_k) + \tilde \eta (\bar x- x_k) \, .
	\end{align}
	In particular, we recognize $\tilde \eta \in \partial \tilde g(\bar x)$ as $-f'(\bar x) + Q \bar x \in \partial \tilde g(\bar x)$ and equivalently $-f'(\bar x) \in \partial_F g(\bar x)$ for the Frech\'et-subdifferential $\partial_F$. This implies $0 \in \partial_F F (\bar x)$, i.e., the stationarity of our limit point $\bar x$. \qed
\end{proof}

Also note that in general the above global convergence result does not rely on the strong convexity of the composite objective function $F$ but yields stationarity of limit points also in the non-convex case of $\ksum < 0$ and $\omega_k > -(\ksum)$ chosen adequately. In particular, this ensures that also independent of strong convexity assumptions near optimal solutions, the algorithm approaches the optimal solution and can then benefit from additional convexity at later iterations.

While bounded sequences in finite dimensional spaces always have convergent subsequences, we can only expect \emph{weak subsequential convergence} in general Hilbert spaces in this case. As one consequence, existence of minimizers of nonconvex functions on Hilbert spaces can usually only be established in the presence of some compactness. On this count we note that in \eqref{eq:weaklim} even weak convergence of $x_k \rightharpoonup \bar x$ would be sufficient. Unfortunately, in the latter case we cannot evaluate $f'(x_k) \to f'(\bar x)$. 

In order to extend our proof to this situation, we require some more structure for both of the parts of our composite objective functional. To this end, we remember the following well-known definition of compact operators:
\begin{definition}
	A linear operator $K:X \to Y$ between two normed vector spaces $X$ and $Y$ is called compact if one of the following equivalent statements holds:
	\begin{itemize}
		\item[1)] The image of the unit ball of $X$ is relatively compact in $Y$ (, i.e., its closure is compact).
		\item[2)] For any bounded sequence $(x_n)_{n \in \NN} \subset X$ the image sequence $(Kx_n)_{n \in \NN} \subset Y$ contains a strongly convergence subsequence $\big(x_{n_k} \big)_{k \in \NN} \subset X$.
	\end{itemize}
\end{definition}
With this notion at hand, we can formulate the following global convergence theorem:
\begin{theorem}
	Let $f$ be of the form $f(x)= \hat f(x) + \check f(Kx)$ where $K$ is a compact operator. Additionally, assume that $g + \hat f$ is convex and weakly lower semi-continuous in a neighborhood of stationary points of \eqref{eq:prob}. Then weak convergence of the sequence of iterates $x_k \rightharpoonup \bar x$ suffices for $\bar x$ to be a stationary point of \eqref{eq:prob}.
	
	If $F$ is strictly convex and radially unbounded, the whole sequence $x_k$ converges weakly to the unique minimizer $x_*$ of $F$. If $F$ is $\kappa$-strongly convex, with $\kappa > 0$, then $x_k \to x_*$ in norm. 
\end{theorem}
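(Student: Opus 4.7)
My plan is to extend the argument of Theorem~\ref{thm:strongstat} to the purely weak-convergence setting by exploiting the structural split $f = \hat f + \check f \circ K$. The only ingredient in the earlier proof that genuinely required norm convergence was the passage $f'(x_k) \to f'(\bar x)$. Here, compactness of $K$ immediately converts $x_k \rightharpoonup \bar x$ into strong convergence $Kx_k \to K\bar x$, and hence $K^*\check f'(Kx_k) \to K^*\check f'(K\bar x)$ in $X^*$. The remaining contribution $\hat f'(x_k)$ need not converge strongly, so I would absorb $\hat f$ into the non-smooth part and work with the locally convex, weakly lsc function $g + \hat f$.

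In more detail, the first-order condition \eqref{eq:subdiffdist} yields $\eta_k \in \partial_F g(x_{k+1})$ with $\eta_k + f'(x_{k+1}) = r_{x_k}(\Delta x_k(\omega_k))$, and Lemma~\ref{lem:dxtozero} forces the right-hand side to zero in $X^*$. Setting $\xi_k \coloneqq \eta_k + \hat f'(x_{k+1})$ and using the Fréchet sum rule for a $C^1$ perturbation of an lsc function, one obtains $\xi_k \in \partial_F(g + \hat f)(x_{k+1})$; by the assumed convexity of $g + \hat f$ near $\bar x$, this coincides with the convex subdifferential $\partial(g+\hat f)(x_{k+1})$ for large $k$. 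Combining with the displayed identity gives
\[
\xi_k = -K^*\check f'(Kx_{k+1}) + r_{x_k}(\Delta x_k(\omega_k)) \longrightarrow -K^*\check f'(K\bar x) \coloneqq \xi
\]
strongly in $X^*$. Inserting $\xi_k$ into the convex subgradient inequality
\[
(g + \hat f)(u) \geq (g + \hat f)(x_{k+1}) + \xi_k(u - x_{k+1}) \quad \forall u \in X \text{ near } \bar x,
\]
I would take $\liminf_{k \to \infty}$: weak lower semi-continuity of $g + \hat f$ controls the first term on the right, while $\xi_k \to \xi$ in norm together with boundedness of the weakly convergent sequence $\{x_{k+1}\}$ gives $\xi_k(u - x_{k+1}) \to \xi(u - \bar x)$. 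This yields $\xi \in \partial(g+\hat f)(\bar x)$, and subtracting $\hat f'(\bar x)$ via the sum rule gives $-f'(\bar x) \in \partial_F g(\bar x)$, i.e., $0 \in \partial_F F(\bar x)$.

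For the second statement I would argue that radial unboundedness of $F$ combined with the monotonic decrease $F(x_{k+1}) \leq F(x_k)$ keeps the iterates in a bounded sublevel set, so any subsequence admits a weakly convergent sub-subsequence. Strict convexity forces every stationary point to be the unique minimizer $x_*$, and the standard two-subsequence argument then upgrades unique weak subsequential limits to weak convergence of the entire sequence. Under $\kappa$-strong convexity I would first deduce $F(x_k) \to F(x_*)$ by combining weak lower semi-continuity of $F$ (which follows from convexity plus lower semi-continuity) along a weakly convergent subsequence with monotonic decrease and the lower bound $F(x_k) \geq F(x_*)$; then the strong-convexity estimate $\tfrac{\kappa}{2}\norm{x_k - x_*}{X}^2 \leq F(x_k) - F(x_*)$ immediately gives norm convergence $x_k \to x_*$.

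The main obstacle, as anticipated, is passing to the limit in the subdifferential inclusion under merely weak convergence of $x_k$. The decisive point is that $\xi_k$ converges to $\xi$ \emph{in norm} rather than only weakly, which is precisely what allows the pairing $\xi_k(u - x_{k+1})$ to converge; this norm convergence is powered by the compact-operator structure of $\check f \circ K$, while the local convexity and weak lsc of $g + \hat f$ are what close the loop through the convex subgradient inequality.
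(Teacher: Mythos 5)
Your proposal is correct and follows essentially the same route as the paper: absorb $\hat f$ into the non-smooth part, use compactness of $K$ to upgrade $x_k \rightharpoonup \bar x$ to norm convergence of $\check f'(Kx_k)\circ K$, pass to the limit in the convex subgradient inequality via weak lower semi-continuity, and then handle the strictly/strongly convex cases by boundedness of sublevel sets plus a standard subsequence argument. Your use of the quadratic growth estimate $\tfrac{\kappa}{2}\norm{x_k-x_*}{X}^2 \le F(x_k)-F(x_*)$ is just a restatement of the paper's shrinking-level-set argument, not a genuinely different method.
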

\begin{proof}
	We can employ the same proof as above replacing $g$ by $g+\hat f$ and using that $\tilde f'(Kx_k) \to \check f'(K\bar x)$ in norm, by compactness. This then shows finally
	\begin{align*}
	(g+\hat f)(u)-(g+\hat f)(\bar x) \ge \eta (u-\bar x) \, ,
	\end{align*}
	i.e., $\eta = -\check f'(K\bar x)K \in \partial(g+\hat f)(\bar x) = \partial_F g(\bar x)+\{\hat f'(\bar x)\}$ which in particular implies 
	\begin{align*}
	- f'(\bar x) = -\check f'(K\bar x)K - \hat f'(\bar x) \in \partial_F g(\bar x) \, .
	\end{align*}
	This again constitutes $0 \in \partial_F F(\bar x)$ and thereby the stationarity of the weak limit point $\bar x$.
	
	Let us now consider the second assertion: $F$ being strictly convex as well as radially unbounded yields that problem \eqref{eq:prob} has a unique solution $x_*$. Additionally, we know that our sequence of iterates is bounded as a consequence of which we can select a weakly convergent subsequence. The first assertion of the theorem then implies that the limit of each subsequence we choose is a stationary point of problem \eqref{eq:prob}, and thus by convexity to the unique optimal solution $x_*$. A standard argument then shows that the whole sequence converges to $x_*$ weakly. 
	
	If $F$ is $\kappa$-strongly convex, then as discussed below~\eqref{eq:kappa2} the diameter the level sets $L_{F(x_k)}$ tends to $0$ as $k\to \infty$, since $F(x_k)\to F(x_*)$. This implies $\|x_k-x_*\|_X \to 0$. \qed
\end{proof}

\section{Second order semi-smoothness} \label{sec:soss}
In order to be able to benefit from the local acceleration result in Theorem~\ref{thm:localacc}, we have to ensure that under the assumptions on $F$ stated in Section~\ref{sec:intro} eventually also full steps are admissible for sufficient descent according to our criterion formulated in \eqref{eq:lambda}. To this end, we want to introduce a new notion of differentiability, which we call second order semi-smoothness, and investigate how it interacts with our Proximal Newton method.

For the smooth part $f$ of our composite objective function $F$ we define a second order semi-smoothness property at some $x_* \in \mathrm{dom} f$ by
\begin{align} \label{eq:sos}
f(x_* + \xi) = f(x_*) + f'(x_*)\xi + \frac 12 H_{x_* + \xi}(\xi,\xi) + o(\norm{\xi}{X}^2)  \quad \mbox{for } \xi \to 0.
\end{align}
for any $\xi \in X$. This will be precisely the assumption that we need to conclude transition to fast local convergence in the following section. 

We give a general definition for operators. Denote by $L^{(2)}(X,Y)$ the normed space of bounded vector valued bilinear forms $X\times X \to Y$, equipped with usual norm:
\[
\|B\|_{L^{(2)}(X,Y)} = \sup_{\xi_1,\xi_2\neq 0} \frac{\|B(\xi_1,\xi_2)\|_Y}{\|\xi_1\|_X\|\xi_2\|_X}. 
\]
\begin{definition}
	Let $X,Y$ be normed linear spaces and  let $D\subset X$ be a neighborhood of $x_*$. Consider  a continuously differentiable operator $T : D \to Y$, and a bounded mapping 
	\[
	T'' : D \to L^{(2)}(X,Y). 
	\]
	We call $T$ second order semi-smooth at $x_* \in X$ with respect to $T''$, if the following estimate holds:
	\[
	\|T(x_*+\xi)-T(x_*)-T'(x_*)\xi-\frac12 T''(x_*+\xi)(\xi,\xi)\|_Y = o(\|\xi\|_X^2) \quad \text{for } \xi \to 0
	\]
\end{definition}
Since $T''$ is evaluated at $x_*+\xi$, the choice of $T''$ is far from unique.  
Twice continuously differentiable operators apparently are second order semi-smooth:
\begin{proposition} \label{prop:contdiffsos}
	Assume that $T$ is twice continuously differentiable at $x_*$. 
	Then $T$ is second order semi-smooth at $x_*$ with respect to the ordinary second derivative $T''$.
\end{proposition}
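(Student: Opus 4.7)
The plan is to combine the standard second order Taylor expansion (which uses $T''$ evaluated at the base point $x_*$) with the continuity of $T''$ at $x_*$ in order to replace the derivative at the base point by the derivative at the shifted point $x_*+\xi$, at the cost of an $o(\|\xi\|_X^2)$ term.

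First I would invoke Taylor's theorem in Banach spaces: since $T$ is twice continuously differentiable at $x_*$, standard results give
\[
T(x_*+\xi) = T(x_*) + T'(x_*)\xi + \tfrac12 T''(x_*)(\xi,\xi) + R(\xi), \qquad \|R(\xi)\|_Y = o(\|\xi\|_X^2)
\]
as $\xi \to 0$. This is the usual second order Taylor expansion, justified either via the integral form of the remainder or via applying the mean value theorem to the function $t \mapsto T(x_* + t\xi)$.

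Next I would estimate the discrepancy between $T''(x_*)$ and $T''(x_*+\xi)$ on the bilinear argument $(\xi,\xi)$. Using the operator norm on $L^{(2)}(X,Y)$,
\[
\bigl\| \tfrac12 T''(x_* + \xi)(\xi,\xi) - \tfrac12 T''(x_*)(\xi,\xi) \bigr\|_Y
\le \tfrac12 \|T''(x_*+\xi) - T''(x_*)\|_{L^{(2)}(X,Y)} \, \|\xi\|_X^2.
\]
By the continuity of $T''$ at $x_*$, the prefactor $\|T''(x_*+\xi) - T''(x_*)\|_{L^{(2)}(X,Y)}$ tends to $0$ as $\xi\to 0$, so the entire right-hand side is $o(\|\xi\|_X^2)$.

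Finally, I would add the two estimates via the triangle inequality to obtain
\[
\bigl\| T(x_*+\xi) - T(x_*) - T'(x_*)\xi - \tfrac12 T''(x_*+\xi)(\xi,\xi) \bigr\|_Y
\le \|R(\xi)\|_Y + \tfrac12\|T''(x_*+\xi)-T''(x_*)\|_{L^{(2)}(X,Y)}\|\xi\|_X^2 = o(\|\xi\|_X^2),
\]
which is exactly the second order semi-smoothness property. There is no genuine obstacle here; the only thing to be careful about is citing a Banach-space version of Taylor's theorem with a qualitative $o(\|\xi\|_X^2)$ remainder, which requires only continuity (not Lipschitz continuity) of $T''$ at $x_*$, and then exploiting exactly that continuity once more to swap the evaluation point of $T''$.
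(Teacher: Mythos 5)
Your proposal is correct and follows essentially the same route as the paper: a second order Taylor expansion with Peano remainder $o(\|\xi\|_X^2)$ at $x_*$, followed by swapping $T''(x_*)$ for $T''(x_*+\xi)$ at the cost of $\tfrac12\|T''(x_*+\xi)-T''(x_*)\|_{L^{(2)}(X,Y)}\|\xi\|_X^2 = o(\|\xi\|_X^2)$ via continuity of $T''$. The paper phrases this as a single splitting into two bracketed terms, but the content is identical.
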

\begin{proof}
	This follows by a simple computation: 
	\begin{align*}
	T(x_*)&+T'(x_*) \xi +\frac{1}{2}T''(x_* + \xi)(\xi,\xi)\\
	&=\big[T(x_*)+T'(x_*) \xi+\frac{1}{2}T''(x_*)(\xi,\xi)\big]+\frac{1}{2}\big[T''(x_* + \xi)(\xi,\xi)-T''(x_*)(\xi,\xi)\big]
	\end{align*}
	Both terms in square brackets are $o(\norm{\xi}{X}^2)$. The first by Fr\'echet differentiability of $T$, the second by continuity of $T''(x)$. \qed
\end{proof}
It is an obvious remark that the sum of two second order semi-smooth functions is second order semi-smooth again with linear and quadratic terms defined
via sums. Furthermore, the following chain rule can be shown: 
\begin{theorem}\label{th:chainrule}
	Suppose that $S:D_S\to Y$ and $T:D_T\to Z$ with $S(D_S)\subset D_T$ are second order semi-smooth at $x_* \in D_S$ and $y_*=S(x_*)$ with respect to $S''$ and $T''$, respectively. Then $T\circ S$ is second order semi-smooth with respect to $(T\circ S)''$, defined as follows:
	\[
	(T\circ S)''(x)(\xi_1,\xi_2) \coloneqq  T''(y)(S'(x) \xi_1,S'(x)\xi_2)+T'(y)S''(x)(\xi_1,\xi_2).
	\]
\end{theorem}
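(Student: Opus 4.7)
The plan is to introduce the increment $\eta := S(x_*+\xi) - S(x_*)$ and apply the two given second order semi-smoothness expansions in sequence. By second order semi-smoothness of $S$ at $x_*$ we have
\begin{equation*}
\eta = S'(x_*)\xi + \tfrac{1}{2}S''(x_*+\xi)(\xi,\xi) + r_S(\xi), \qquad \|r_S(\xi)\|_Y = o(\|\xi\|_X^2),
\end{equation*}
and in particular $\|\eta\|_Y = O(\|\xi\|_X)$. Since $S(D_S)\subset D_T$, the point $y_*+\eta = S(x_*+\xi)$ lies in $D_T$, so the second order semi-smoothness of $T$ at $y_*$ yields
\begin{equation*}
T(y_*+\eta) = T(y_*) + T'(y_*)\eta + \tfrac{1}{2}T''(S(x_*+\xi))(\eta,\eta) + r_T(\eta)
\end{equation*}
with $\|r_T(\eta)\|_Z = o(\|\eta\|_Y^2) = o(\|\xi\|_X^2)$.

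Substituting the expansion of $\eta$ into the linear term gives
\begin{equation*}
T'(y_*)\eta = T'(y_*)S'(x_*)\xi + \tfrac{1}{2}T'(y_*)S''(x_*+\xi)(\xi,\xi) + T'(y_*)r_S(\xi),
\end{equation*}
and the last summand is $o(\|\xi\|_X^2)$ since $T'(y_*)$ is a bounded operator. To match the formula claimed for $(T\circ S)''$, I would then replace $T'(y_*)$ by $T'(S(x_*+\xi))$ in the middle summand; the resulting discrepancy $\tfrac{1}{2}[T'(y_*)-T'(S(x_*+\xi))]S''(x_*+\xi)(\xi,\xi)$ is $o(1)\cdot O(\|\xi\|_X^2) = o(\|\xi\|_X^2)$ by continuity of $T'$ (from continuous differentiability of $T$), continuity of $S$, and boundedness of $S''$.

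The main technical step is the quadratic term, where I need to show
\begin{equation*}
T''(S(x_*+\xi))(\eta,\eta) = T''(S(x_*+\xi))(S'(x_*+\xi)\xi,\, S'(x_*+\xi)\xi) + o(\|\xi\|_X^2).
\end{equation*}
Writing $\eta = S'(x_*+\xi)\xi + \delta$, rearrangement of the expansion of $\eta$ gives
\begin{equation*}
\delta = [S'(x_*) - S'(x_*+\xi)]\xi + \tfrac{1}{2}S''(x_*+\xi)(\xi,\xi) + r_S(\xi).
\end{equation*}
The first summand is $o(\|\xi\|_X)$ by continuity of $S'$, while the other two are $O(\|\xi\|_X^2)$, hence $\|\delta\|_Y = o(\|\xi\|_X)$. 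Expanding by bilinearity produces three cross terms, each involving at least one factor of $\delta$; using boundedness of $T''$ together with $\|S'(x_*+\xi)\xi\|_Y = O(\|\xi\|_X)$, each such term is $o(\|\xi\|_X^2)$. Combining all estimates yields the desired expansion with remainder $o(\|\xi\|_X^2)$, matching $(T\circ S)''$ as defined.

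The main obstacle is precisely this quadratic step: only continuity of $S'$ is available (not Fr\'echet differentiability), so $\delta$ cannot be controlled better than $o(\|\xi\|_X)$, and it is essential that $T''$, being bilinear and bounded, absorbs this factor into each cross term to produce the required $o(\|\xi\|_X^2)$. The remainder of the argument is routine bookkeeping of bounded or continuous quantities.
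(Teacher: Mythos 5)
Your proposal is correct and follows essentially the same route as the paper's proof: both isolate the $T$-remainder at $y_*$ with increment $\eta=S(x_*+\xi)-S(x_*)$, the $S$-remainder hit by $T'(y_*)$, the swap $T'(y_*)\to T'(S(x_*+\xi))$ controlled by continuity of $T'$ and boundedness of $S''$, and the comparison of $T''(y)(\eta,\eta)$ with $T''(y)(S'(x)\xi,S'(x)\xi)$ using $\|\eta-S'(x)\xi\|_Y=o(\|\xi\|_X)$. The only cosmetic difference is that the paper handles the last term via the identity $B(a,a)-B(b,b)=B(a+b,a-b)$ while you expand $\eta=S'(x)\xi+\delta$ by bilinearity; the resulting estimates are identical.
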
  
\begin{proof}
	We introduce the notations $y_*=S(x_*)$, $x=x_*+\xi$, $y=S(x)$, and  $\eta = y-y_*$.
	With these prerequisites we can, as usual for chain rules, split the remainder term:
	\begin{align}
	\notag (T\circ S)(x)&-(T\circ S)(x_*)-(T\circ S)'(x_*)\xi-\frac12 (T\circ S)''(x)(\xi,\xi)\\
	\notag &=T(y)-T(y_*) - T'(y_*)S'(x_*)\xi \\
	\notag &\quad - \frac12\Big(T''(y)(S'(x) \xi,S'(x) \xi) + T'(y) S''(x)(\xi,\xi)\Big)\\
	\label{eq:s1} &=T(y)-T(y_*)-T'(y_*)\eta-\frac{1}{2}T''(y) (\eta,\eta)\\
	\label{eq:s2} &\quad + T'(y_*) \left(S(x)-S(x_*)- S'(x_*)\xi -\frac{1}{2}S''(x)(\xi,\xi)\right)\\
	\label{eq:s3}&\quad + \frac12( T'(y_*) - T'(y))S''(x)(\xi,\xi)\\
	\label{eq:s4} &\quad + \frac{1}{2}\left(T''(y) (\eta,\eta)-T''(y) (S'(x) \xi,S'(x) \xi)\right)
	\end{align}
	We will show that each of the expressions \eqref{eq:s1}-\eqref{eq:s4} is $o(\norm{\xi}{X}^2)$. For $\eqref{eq:s1}$ this follows from second order semi-smoothness of $T$, while second order semi-smoothness of $S$ implies the desired result for \eqref{eq:s2}. Continuity of $T'$ and boundedness of $S''$ yield that \eqref{eq:s3} is $o(\norm{\xi}{X}^2)$. 
	Finally, \eqref{eq:s4} can be reformulated via the third binomial formula:
	\begin{align*}
	\|T''(y) (\eta,\eta)-T''(y) (S'(x) \xi,S'&(x) \xi)\|_Z=\|T''(y) (\eta+S'(x) \xi,\eta-S'(x) \xi)\|_Z \\
	&\le \|T''(y)\|_{L^{(2)}(Y,Z)}\|\eta+S'(x) \xi\|_Y\|\eta-S'(x) \xi\|_Y.
	\end{align*}
	By continuous differentiablity of $S$ (which is a prerequisite of second order semi-smoothness by our definition) we estimate:
	\begin{align}
	\label{eq:f2}  \norm{\eta + S'(x) \xi}{Y} &=O(\norm{\xi}{X})\\
	\label{eq:f2b} \norm{\eta-S'(x) \xi}{Y}\le\norm{\eta-S'(x_*) \xi}{Y}+\norm{(S'(x_*)-S'(x)) \xi}{Y}  &=o(\norm{\xi}{X}),
	\end{align}
	which finally yields the desired result. \qed
\end{proof}
\begin{remark}
	In the case $T'(y_*)=0$, we observe from \eqref{eq:s2} that $S$ only needs to be continuously differentiable and we may set $S''=0$. 
\end{remark}
Second order semi-smoothness of $T$ and semi-smoothness of $T'$ as in \eqref{eq:semismooth} are closely related but are not equivalent in general. Even in the case of $T''(x) \coloneqq \partial_N T'(x)$ we cannot conclude one condition from the other, e.g. via the fundamental theorem of calculus, because of the lack of continuity of $\partial_N T'$. 

Let us shortly give a both simple and illustrative example: Consider the function
\begin{align*}
	h: \, \RR \to \RR \, , \, x \mapsto x^3 \sin \big( \frac 1x \big)
\end{align*}
which is continuously differentiable with $h'(x) = x\big[3x\sin\big(\frac 1x\big) - \cos \big( \frac 1x \big)\big]$, $x \neq 0$, and $h'(0) = 0$. The cubic asymptotics of $h$ suggest that $T''(x) \equiv 0$ is a possible definition for second order semi-smoothness of $h$ at $x_* = 0$ as above. Apparently, we obtain for $x\in \RR$ and $\delta x = x - x_* = x$:
\begin{align*}
	|h(x) - h(x_*) - h'(x_*)\delta x - \frac 12 T''(x)(\delta x)^2| = |\delta x|^3\big|\sin\big( \frac 1x \big)\big| = O\big( |\delta x|^3 \big) \, , \, \delta x \to 0 \, ,
\end{align*}
i.e., that $h$ is indeed second order semi-smooth at $x_* = 0$ with respect to $T''$. On the other hand, we have 
\begin{align*}
	|h'(x_*) - h'(x) - T''(x)(x_*-x)| = |\delta x| \big| 3x \sin \big(\frac 1x\big) - \cos \big( \frac 1x \big) \big| \neq o\big( \big| \delta x \big| \big) \, , \, \delta x \to 0 \, ,
\end{align*}
which implies that $h'$ is indeed not semi-smooth at $x_* = 0$ with respect to the same $T''$, cf. \eqref{eq:semismooth}. However, in many cases of practical interest, both conditions can be shown to hold.

For instance, the function $\phi(x)= \max\{0,x\}^2$ is second order semi-smooth at the point $x=0$ with respect to 
\[
\phi''(\xi) = \left\{
\begin{array}{rcl}
0 &:& \xi < 0\\
1 &:& \xi \ge 0
\end{array}\right. 
\]
as well as twice Fr\'echet differentiable (and thus also second-order semi-smooth, cf. Proposition~\ref{prop:contdiffsos}) at any other point $x\neq 0$ with the same $\phi''(\xi)$. By standard techniques we can lift this property to superposition operators on $L_p$-spaces for appropriate $p$.

For convenience, we recapitulate the following lemma, which is a slight generalization of a standard result on continuity of superposition operators. 
\begin{lemma}\label{lem:kras}
	Let $\Omega$ a measurable subset of $\RR^d$, and 
	$\psi: \RR \times \Omega \to \RR$.
	For each measurable function $x:\Omega \to \RR$ assume that the function $\Psi(x)$, defined by
	$\Psi(x)(t) = \psi(x(t),t)$ is measurable. 
	Let $x_* \in L_p(\Omega,\RR)$ be given. Then the following assertion holds:
	
	If $\psi$ is continuous with respect to $x$ at $(x_*(t),t)$ for almost all  $t\in \Omega$, 
	and $\Psi$ maps $L_p(\Omega,\RR)$ into $L_s(\Omega,\RR)$ for $1 \le p,s < \infty$, 
	then $\Psi$ is continuous at $x_*$ in the norm topology.
\end{lemma}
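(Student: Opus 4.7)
The plan is to argue by contradiction using a Riesz-type subsequence extraction together with a Krasnoselskii growth estimate derived from the mapping property $\Psi : L_p(\Omega,\RR) \to L_s(\Omega,\RR)$. Suppose $\Psi$ were not continuous at $x_*$; then there would exist $\varepsilon_0 > 0$ and a sequence $x_n \to x_*$ in $L_p$ such that $\|\Psi(x_n) - \Psi(x_*)\|_{L_s(\Omega)} \ge \varepsilon_0$ for all $n$. The goal is to derive a contradiction by producing a subsequence along which $\Psi(x_n) \to \Psi(x_*)$ in $L_s$.

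First, by the classical theorem of F.~Riesz, since $x_n \to x_*$ in $L_p$ one can pass to a subsequence (not relabeled) with $x_n(t) \to x_*(t)$ pointwise a.e.\ and a dominating function $h \in L_p(\Omega,\RR_{\ge 0})$ satisfying $|x_n(t)| \le h(t)$ for a.e.\ $t$. The hypothesis that $\psi(\cdot,t)$ is continuous at $x_*(t)$ for a.e.\ $t$ then yields the pointwise almost-everywhere convergence $\Psi(x_n)(t) = \psi(x_n(t),t) \to \psi(x_*(t),t) = \Psi(x_*)(t)$.

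Second, I would upgrade this almost-everywhere convergence to norm convergence in $L_s$ via Lebesgue's dominated convergence theorem, for which a uniform majorant of $|\Psi(x_n)(t)|^s$ is required. This is furnished by a Krasnoselskii growth bound: the sole assumption that $\Psi$ sends $L_p$ into $L_s$ implies the existence of $a \in L_s(\Omega,\RR_{\ge 0})$ and a constant $c > 0$ with $|\psi(u,t)| \le a(t) + c\,|u|^{p/s}$ for a.e.\ $t$ and all $u \in \RR$. Combined with $|x_n(t)|\le h(t)$, this gives the uniform domination $|\Psi(x_n)(t)|^s \le C\bigl(a(t)^s + h(t)^p\bigr) \in L_1(\Omega)$, whence dominated convergence applied to $|\Psi(x_n) - \Psi(x_*)|^s$ produces $\|\Psi(x_n) - \Psi(x_*)\|_{L_s} \to 0$, in contradiction with the lower bound $\varepsilon_0$.

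The main obstacle is the second step, namely the Krasnoselskii growth estimate itself; this is the non-trivial structural content of the classical Nemytskii theory. The standard approach is again by contradiction: if no such bound existed, one could select measurable sets $E_n$ of geometrically decreasing measure together with values $u_n$ on which $|\psi(u_n, \cdot)|$ is very large on $E_n$, and glue them into a piecewise-constant function $x = \sum_n u_n \mathbbm{1}_{E_n} \in L_p(\Omega)$ whose image $\Psi(x)$ violates $L_s$-integrability. Observe that continuity of $\psi$ in its first argument is needed only along the trajectory $(x_*(t),t)$ in order to pass to the pointwise a.e.\ limit in the first step, which is precisely the sense in which the localized continuity hypothesis of the statement is sufficient and yields the claimed slight generalization.
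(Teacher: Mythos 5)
Your first step (extracting a subsequence that converges pointwise a.e.\ with an $L_p$-majorant, and then using continuity of $\psi(\cdot,t)$ at the single point $x_*(t)$ to get $\Psi(x_n)(t)\to\Psi(x_*)(t)$ a.e.) is correct and is indeed how the argument must begin. The gap is in your second step: the Krasnoselskii growth estimate $|\psi(u,t)|\le a(t)+c\,|u|^{p/s}$ does \emph{not} follow from the acting condition $\Psi(L_p)\subset L_s$ under the hypotheses of this lemma. The classical derivation of that bound from the acting condition is a theorem about \emph{Carath\'eodory} functions: its proof (including the gluing construction you sketch) requires selecting, in a measurable way, values $u_n$ at which $|\psi(u_n,\cdot)|$ is large, and this selection rests on reducing suprema over $u\in\RR$ to a countable dense set --- which uses continuity of $\psi(\cdot,t)$ on all of $\RR$ for a.e.\ $t$. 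Here $\psi(\cdot,t)$ is only assumed continuous at the one point $x_*(t)$; dropping the Carath\'eodory property is the entire point of the lemma (the remark following it says so explicitly, and the intended application is to $\phi''$, which has a jump). For merely sup-measurable integrands the acting condition is well known \emph{not} to imply a pointwise growth bound, so your pivotal inequality is unavailable, and with it the dominated-convergence step collapses.

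The repair is to aim the gluing argument at the sequence itself rather than at a pointwise bound on $\psi$: assuming $\|\Psi(x_{n})-\Psi(x_*)\|_{L_s}\ge\varepsilon_0$ along a subsequence with $\sum_k\|x_{n_k}-x_*\|_{L_p}^p<\infty$, the a.e.\ convergence $g_n:=|\Psi(x_n)-\Psi(x_*)|^s\to 0$ forces the mass of $g_{n_k}$ onto sets $F_k$ of vanishing measure; after thinning one obtains disjoint sets $E_k$ with $\int_{E_k}g_{n_k}\ge\varepsilon_0^s/4$, and the single function $x \coloneqq x_*+\sum_k(x_{n_k}-x_*)\mathbbm{1}_{E_k}$ lies in $L_p$ by the summability of the increments, while locality of the superposition operator gives
\begin{align*}
\int_\Omega|\Psi(x)-\Psi(x_*)|^s \ \ge\ \sum_k\int_{E_k}g_{n_k}\ =\ \infty ,
\end{align*}
contradicting $\Psi(x),\Psi(x_*)\in L_s$. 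Here the only values of $\psi$ ever used are those along the measurable graphs $x_{n_k}$, so no measurable selection and no growth bound are needed; this is essentially the argument in the reference the paper cites for this lemma.
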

\begin{proof}
	cf. e.g. \cite[Lemma 3.1]{Schiela2006}. \qed
\end{proof}
The standard text book result requires $\psi$ to be a Caratheodory function, and thus in particular continuous in $x$ for all $t\in \Omega$. This assumption, is slightly weakened here to the almost everywhere sense. It is known, for example, that pointwise limits and suprema of Caratheodory functions yield superposition operators that map measurable functions to measurable functions. The mapping $\phi''$ as defined above is an example. Importantly, this result is not true for the case $p<s=\infty$. 

\begin{proposition}\label{pro:soss}
	Consider a real function $\phi:\RR \to \RR$ with globally Lipschitz-continuous derivative $\phi' : \RR \to \RR$, which is second order semi-smooth with respect to a bounded function $\phi'' : \RR \to \RR$. Let $\Omega \subset \RR^d$ be a set of finite measure and assume that the composition $\phi''\circ u$ is measurable for any measurable function $u : \Omega \to \RR$. Let $p>2$. Then for each $x\in L_p(\Omega)$ the superposition operator $\Phi : L_p(\Omega) \to L_1(\Omega)$ is second order semi-smooth with respect to $\Phi''(x)\in L_2(L_p(\Omega),L_1(\Omega))$ defined by $\Phi''(x)(\xi_1,\xi_2)(\omega)=\phi''(x(\omega))\xi_1(\omega)\xi_2(\omega)$ almost everywhere. 
\end{proposition}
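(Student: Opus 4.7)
The plan is to reduce the claim to a scalar estimate that is integrated using Hölder's inequality. I would define the scalar remainder
\[
\rho(a,b) := \frac{1}{b^2}\Big(\phi(a+b)-\phi(a)-\phi'(a)b-\tfrac12\phi''(a+b)b^2\Big)\quad(b\neq 0),\qquad \rho(a,0):=0.
\]
The Lipschitz property of $\phi'$ gives $|\phi(a+b)-\phi(a)-\phi'(a)b|\le \tfrac{L_\phi}{2} b^2$, and boundedness of $\phi''$ yields $|\tfrac12\phi''(a+b)b^2|\le \tfrac{M_{\phi''}}{2} b^2$, so $|\rho|\le C:=\tfrac12(L_\phi+M_{\phi''})$ globally. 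Pointwise second order semi-smoothness of $\phi$ at every $a\in\RR$ moreover yields $\rho(a,b)\to 0$ as $b\to 0$, for every fixed $a$.

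The superposition remainder at $x\in L_p(\Omega)$ factors pointwise as
\[
R(\xi)(\omega) := \phi(x+\xi)-\phi(x)-\phi'(x)\xi-\tfrac12\phi''(x+\xi)\xi^2 = \rho(x(\omega),\xi(\omega))\,\xi(\omega)^2.
\]
Since $p>2$, the exponents $p/2$ and $s:=p/(p-2)$ are finite and conjugate, so Hölder's inequality gives
\[
\|R(\xi)\|_{L_1(\Omega)} \le \|\rho(x,\xi)\|_{L_s(\Omega)}\,\|\xi\|_{L_p(\Omega)}^2.
\]
The problem thus reduces to showing that $\|\rho(x,\xi)\|_{L_s(\Omega)}\to 0$ whenever $\|\xi\|_{L_p(\Omega)}\to 0$.

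This last step is a continuity statement at $\xi_\star=0$ for the superposition $\xi\mapsto \rho(x,\xi)$, which I would obtain from Lemma~\ref{lem:kras} applied to $\psi(b,\omega):=\rho(x(\omega),b)$. Its three hypotheses are verified as follows: measurability of $\omega\mapsto\rho(x(\omega),\xi(\omega))$ for any measurable $\xi$ follows from the explicit formula for $\rho$, together with continuity of $\phi$ and $\phi'$ and the standing assumption that $\phi''\circ u$ is measurable for every measurable $u$; pointwise continuity of $\psi(\cdot,\omega)$ at $b=0$ for every $\omega$ is exactly the vanishing property of $\rho$ at $b=0$ established in the first paragraph; and the superposition maps $L_p(\Omega)$ into $L_s(\Omega)$ because $|\rho|\le C$ and $|\Omega|<\infty$ force its image into $L_\infty(\Omega)\subset L_s(\Omega)$. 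Lemma~\ref{lem:kras} then yields the required continuity at $\xi=0$ and hence the asserted $o(\|\xi\|_{L_p(\Omega)}^2)$ estimate.

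The main subtlety is to reconcile the $0/0$ indeterminacy of $\rho$ at $b=0$ with the measurability prerequisite of Lemma~\ref{lem:kras}: without the global a priori bound $|\rho|\le C$ provided by the Lipschitz and boundedness hypotheses, one would be forced into a direct subsequence-plus-dominated-convergence argument and lose the clean invocation of the superposition lemma. Well-definedness and continuous differentiability of $\Phi$ as a map $L_p(\Omega)\to L_1(\Omega)$, together with boundedness of the bilinear form $\Phi''(x)$ in $L^{(2)}(L_p(\Omega),L_1(\Omega))$ (with norm controlled by $M_{\phi''}\,|\Omega|^{1-2/p}$ via Cauchy--Schwarz and the embedding $L_p\hookrightarrow L_2$), follow from the same hypotheses and I would dispatch them only briefly before presenting the estimate above. \qed
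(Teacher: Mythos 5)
Your proposal is correct and follows essentially the same route as the paper's own proof: define the bounded scalar remainder quotient, factor the superposition remainder as that quotient times $\xi^2$, apply H\"older with exponents $s=p/(p-2)$ and $p/2$, and invoke Lemma~\ref{lem:kras} to get continuity of the remainder superposition at $\xi=0$. The only differences are cosmetic (you parametrize the remainder by the value $a=x(\omega)$ rather than by $\omega$, keep the factor $\tfrac12$ consistently with the definition of second order semi-smoothness, and spell out the measurability and boundedness side conditions a bit more explicitly than the paper does).
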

\begin{proof}
	Consider a representative of $x\in L_p(\Omega)$ and the function 
	\[
	r_x(\omega,t) \coloneqq  \frac{\phi(x(\omega)+t)-\phi(x(\omega))-\phi'(x(\omega))t-\phi''(x(\omega)+t)t^2}{t^2}
	\]
	which is defined for $t\neq 0$ and $r_x(\omega,t)\coloneqq 0$ for $t=0$. By Lipschitz-continuity of $\phi'$ and boundedness of $\phi''$ we observe that $r_x$ is bounded uniformly on $\Omega \times \RR$. 
	Thus, the superposition operator $R_x : L_p(\Omega) \to L_s(\Omega)$ :  $R_x(\xi)(\omega)=r_x(\omega,\xi(\omega))$ is well defined for any $1 \le s\le \infty$. By second order semi-smoothness $r_x(\omega,\cdot)$ is continuous at $t=0$ for almost all $\omega\in \Omega$. Hence, by Lemma~\ref{lem:kras} $R_x$ is continuous as an operator at $\xi=0$ for any $s<\infty$. By the H\"older inequality with $1/s+2/p=1$ we conclude the desired estimate:
	\begin{align*}
	\|\Phi(x+\xi)-\Phi(x)-\Phi'(x)\xi-\Phi''(x)(\xi,\xi)\|_{L_1(\Omega)} &= \|R_x(\xi)\cdot\xi \cdot \xi\|_{L_1(\Omega)}\\
	\le \|R_x(\xi)\|_{L_s(\Omega)}\|\xi\|^2_{L_p(\Omega)} &= o(\|\xi\|_{L_p(\Omega)}^2). 
	\end{align*} \qed
\end{proof}
Unsurprisingly and in analogy to the theory of semi-smooth superposition operators, there is a norm gap in the sense that Proposition~\ref{pro:soss} is false for $p=2$. This is closely related to the so call two-norm discrepancy (cf. e.g. \cite{Tro2010}). 

As in the above example, $\phi''(\xi)$ has a discontinuity at $\xi=0$, so we cannot expect that $\Phi''$ is a continuous mapping on a given open set. However, we can show the following result:
\begin{proposition}\label{pro:cont}
	Let $p>2$ and $x_* \in L_p(\Omega)$ be fixed. Assume that function $(\omega,t) \to \phi''(x_*(\omega)+t)$ is continuous in $t$ for almost all $\omega \in \Omega$. Then the mapping $\Phi'' : L_p(\Omega) \to L^{(2)}(L_p(\Omega),L_1(\Omega))$ is continuous at $x_*$. 
\end{proposition}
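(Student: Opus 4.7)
The plan is to reduce the continuity of $\Phi''$ to a classical continuity statement for a superposition operator on Lebesgue spaces, namely the one induced by $\phi''$ itself, and then invoke Lemma~\ref{lem:kras}. First I would fix $x_* \in L_p(\Omega)$ and a sequence $x_n \to x_*$ in $L_p(\Omega)$; by definition of the bilinear operator norm and the pointwise formula for $\Phi''$, the quantity to control is
\[
\|\Phi''(x_n)-\Phi''(x_*)\|_{L^{(2)}(L_p,L_1)} = \sup_{\xi_1,\xi_2\neq 0} \frac{\|(\phi''(x_n(\cdot))-\phi''(x_*(\cdot)))\,\xi_1\xi_2\|_{L_1(\Omega)}}{\|\xi_1\|_{L_p}\|\xi_2\|_{L_p}}.
\]

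Next I would apply the H\"older inequality with the triple of exponents $(s,p,p)$ satisfying $1/s+2/p = 1$, which is admissible precisely because $p>2$. This yields
\[
\|(\phi''(x_n)-\phi''(x_*))\,\xi_1\xi_2\|_{L_1(\Omega)} \le \|\phi''(x_n)-\phi''(x_*)\|_{L_s(\Omega)}\,\|\xi_1\|_{L_p}\|\xi_2\|_{L_p},
\]
so that the operator norm of $\Phi''(x_n)-\Phi''(x_*)$ is bounded by $\|\phi''(x_n)-\phi''(x_*)\|_{L_s(\Omega)}$. The whole problem thereby reduces to showing that the scalar superposition operator $\Psi : L_p(\Omega)\to L_s(\Omega)$, $\Psi(x)(\omega)=\phi''(x(\omega))$, is continuous at $x_*$.

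For this last step I would verify the hypotheses of Lemma~\ref{lem:kras} applied to $\psi(v,\omega)=\phi''(v)$. Continuity in the argument at $(x_*(\omega),\omega)$ for a.e.\ $\omega\in\Omega$ is exactly the assumption of the proposition, rephrased (continuity of $t\mapsto \phi''(x_*(\omega)+t)$ at $t=0$). Well-definedness of $\Psi$ as a map $L_p(\Omega)\to L_s(\Omega)$ follows from boundedness of $\phi''$ and finiteness of $|\Omega|$: one has $\|\Psi(x)\|_{L_s(\Omega)}\le \|\phi''\|_{L_\infty}|\Omega|^{1/s}$ for all $x$, so any $s<\infty$ works. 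Measurability of $\Psi(x)$ for measurable $x$ is the same hypothesis already used in Proposition~\ref{pro:soss}. Hence Lemma~\ref{lem:kras} yields $\|\phi''(x_n)-\phi''(x_*)\|_{L_s(\Omega)} \to 0$, concluding the argument.

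The only delicate point I foresee is the exponent bookkeeping and the insistence on $p>2$: the choice of finite $s$ with $1/s+2/p=1$ is what makes both H\"older's inequality work and Lemma~\ref{lem:kras} applicable (the lemma fails for $s=\infty$, which is precisely the forbidden case $p=2$). This mirrors the norm gap already observed after Proposition~\ref{pro:soss} and is the structural reason the continuity statement cannot be upgraded to $p=2$. No additional machinery beyond H\"older's inequality and Lemma~\ref{lem:kras} is needed.
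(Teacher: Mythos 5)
Your proposal is correct and follows essentially the same route as the paper's proof: reduce to the scalar superposition operator $\omega \mapsto \phi''(x(\omega))$ viewed as a map $L_p(\Omega)\to L_s(\Omega)$ with $1/s+2/p=1$, apply the H\"older inequality to bound the bilinear operator norm, and invoke Lemma~\ref{lem:kras} for continuity at $x_*$. Your additional remarks on verifying the hypotheses of Lemma~\ref{lem:kras} and on why $p>2$ is needed are accurate elaborations of what the paper leaves implicit.
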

\begin{proof}
	We apply Lemma~\ref{lem:kras} to the superposition operator $\tilde \Phi''(x)(\omega)\coloneqq \phi''(x(\omega))$, which maps $L_p(\Omega)\to L_s(\Omega)$ and the use the H\"older inequality to conclude:
	\[
	\|[\Phi''(x)-\Phi''(x_*)](\xi_1,\xi_2)\|_{L_1(\Omega)} \le \|\tilde \Phi''(x)-\tilde \Phi''(x_*)\|_{L_s(\Omega)}\|\xi_1\|_{L_p(\Omega)}\|\xi_2\|_{L_p(\Omega)}.
	\]
	\qed
\end{proof}
In our example $\phi(x)=\max\{0,x\}^2$ fulfills the hypothesis of this theorem at $x_*\in L_p(\Omega)$, if $x_*(\omega)=0$ only on a set of measure $0$ in $\Omega$. This kind of regularity assumption can also be found frequently in the literature on semi-smooth Newton methods (cf. e.g. \cite{HintUlb2004}).  

\section{Transition to Fast Local Convergence} \label{sec:transition}

Let us now turn our attention back to our Proximal Newton method and consider the admissibility of undamped update steps near optimal solutions of problem \eqref{eq:prob}. Both the semi-smoothness of $f'$ from \eqref{eq:semismooth} and the second order semi-smoothness of $f$ from \eqref{eq:sos} will contribute a crucial part to the proof of this result. Additionally, the local acceleration result from Theorem~\ref{thm:localacc} will play an important role.

However, an algorithm that tests in every iterate, whether the undamped Newton step is acceptable is likely to compute many unnecessary trial iterates during the early phase of globalization. Thus, it is of interest, whether damped Newton steps are acceptable as well close to the solution. 

In order to establish the corresponding proposition of admissibility we will first have to investigate the relation between damped and undamped steps more closely.
\begin{lemma} \label{lem:steprelations}
	Let $H_x$ be a bilinear form as in \eqref{eq:kappa1} and assume that $g$ suffices \eqref{eq:kappa2} where $\ksum > 0$ holds and $x \in X$ is arbitrary. Then the damped update step $\Delta x(\omega)$ from \eqref{eq:dampedstep} and the undamped update step $\Delta x$ from \eqref{eq:fullstep} satisfy the estimates
	\begin{align}
	\norm{\Delta x - \Delta x(\omega)}{X} &\leq \frac{\omega}{\ksum} \norm{\Delta x(\omega)}{X} \label{eq:steprelations1} \\
	\norm{\Delta x(\omega)}{X} \leq \norm{\Delta x}{X} &\leq \big( \frac{\omega}{\ksum} + 1 \big) \norm{\Delta x(\omega)}{X} \label{eq:steprelations2} 
	\end{align}
	for any $\omega \geq 0$. 
\end{lemma}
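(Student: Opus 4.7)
The plan is to apply Proposition~\ref{prop:scndprox} to the proximal-mapping characterisations of both the undamped step and the damped step, and then combine the two resulting variational inequalities. I would start from the identities
\begin{align*}
x+\Delta x &= \p_g^{H_x}\bigl(H_x(x) - f'(x)\bigr), \\
x + \Delta x(\omega) &= \p_g^{H_x + \omega \riesz}\bigl((H_x + \omega\riesz)x - f'(x)\bigr),
\end{align*}
which are available from \eqref{eq:proxupdate} and \eqref{eq:dampprox}, and invoke Proposition~\ref{prop:scndprox} in the first with test point $\xi = x + \Delta x(\omega)$, and in the second with test point $\xi = x + \Delta x$. Note that both prox-problems satisfy the standing hypothesis $\ksum > 0$ (with $\keins$ replaced by $\keins + \omega \geq \keins$ in the damped case), so Proposition~\ref{prop:scndprox} is applicable.

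Adding the two resulting inequalities, the $g$-contributions cancel and so do the two $f'(x)$-terms. Setting $d \coloneqq \Delta x - \Delta x(\omega)$, a short bookkeeping computation boils the sum down to
\begin{align*}
H_x(d,d) + \kzwei \norm{d}{X}^2 \leq \omega \scpr{\Delta x(\omega)}{d}.
\end{align*}
Applying \eqref{eq:kappa1} on the left and Cauchy--Schwarz on the right gives $(\ksum)\norm{d}{X}^2 \leq \omega \norm{\Delta x(\omega)}{X}\norm{d}{X}$, which is exactly \eqref{eq:steprelations1} after dividing by $\norm{d}{X}$ (the case $d = 0$ being trivial).

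For \eqref{eq:steprelations2}, the upper bound is immediate from the triangle inequality combined with \eqref{eq:steprelations1}. The lower bound $\norm{\Delta x(\omega)}{X} \leq \norm{\Delta x}{X}$ I would obtain by a direct suboptimality comparison of the two minimisation problems: optimality of $\Delta x(\omega)$ for $\lambda_\omega$ and of $\Delta x$ for $\lambda_0 \coloneqq \lambda_\omega - \tfrac{\omega}{2}\norm{\cdot}{X}^2$ yields $\lambda_\omega(\Delta x(\omega)) \leq \lambda_\omega(\Delta x)$ and $\lambda_0(\Delta x) \leq \lambda_0(\Delta x(\omega))$; adding these, the $\lambda_0$-parts cancel and one is left with $\tfrac{\omega}{2}\norm{\Delta x(\omega)}{X}^2 \leq \tfrac{\omega}{2}\norm{\Delta x}{X}^2$, whence the claim for $\omega > 0$ (the case $\omega = 0$ being trivial since then $\Delta x(0) = \Delta x$).

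The main obstacle is the bookkeeping in the addition of the two variational inequalities: because the second one uses the perturbed bilinear form $H_x + \omega\riesz$, the mixed term $\omega \riesz \Delta x(\omega)(d) = \omega \scpr{\Delta x(\omega)}{d}$ survives the cancellations and becomes precisely the quantity that drives the estimate. Once this point is handled cleanly, the appeal to \eqref{eq:kappa1}, Cauchy--Schwarz, the triangle inequality, and the elementary suboptimality argument are all routine.
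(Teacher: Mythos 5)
Your proof is correct, and while the engine behind \eqref{eq:steprelations1} is the same as in the paper (Proposition~\ref{prop:scndprox} applied to both proximal characterisations with crossed test points, then adding), your execution differs in two worthwhile ways. First, you keep the damped step in the form $\p_g^{H_x+\omega\riesz}\bigl((H_x+\omega\riesz)x-f'(x)\bigr)$, so the whole perturbation sits in the bilinear form and the sum of the two variational inequalities collapses directly to $H_x(d,d)+\kzwei\norm{d}{X}^2\le\omega\scpr{\Delta x(\omega)}{d}$; the paper instead shifts $\frac{\omega}{2}\norm{\cdot}{X}^2$ into the nonsmooth part $g$, which forces it to absorb the extra squared-norm terms via the estimate $\norm{\Delta x}{X}^2-\norm{\Delta x(\omega)}{X}^2-\norm{d}{X}^2\le 2\norm{\Delta x(\omega)}{X}\norm{d}{X}$ before reaching essentially the same inequality. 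Your bookkeeping is cleaner and, after Cauchy--Schwarz, even slightly sharper in the intermediate step. Second, for the monotonicity $\norm{\Delta x(\omega)}{X}\le\norm{\Delta x}{X}$ the paper recombines its two variational inequalities to get $0\le(\keins+\kzwei+\frac{\omega}{2})\norm{d}{X}^2\le\frac{\omega}{2}(\norm{\Delta x}{X}^2-\norm{\Delta x(\omega)}{X}^2)$, whereas you use the elementary two-sided suboptimality comparison of $\lambda_0$ and $\lambda_\omega$; your argument is more elementary and uses no convexity structure beyond existence of the two minimisers, while the paper's version has the minor side benefit of quantifying the gap $\norm{\Delta x}{X}^2-\norm{\Delta x(\omega)}{X}^2$ in terms of $\norm{d}{X}^2$. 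The remaining steps (triangle inequality for the upper bound in \eqref{eq:steprelations2}, trivial cases $d=0$ and $\omega=0$) are handled correctly.
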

\begin{proof}
	The above set of estimates can all be deduced from adequate proximal representations of the respective update steps. We can characterize the undamped step via $\Delta x = x_+ - x$ where the updated iterate is given by
	\begin{align*}
	x_+ &= \argmin_{y \in X} f'(x)(y-x) + \frac{1}{2} H_x(y-x,y-x)+g(y)-g(x) \\
	&= \p_g^{H_x} \big(H_x (x)- f'(x)\big) \, .
	\end{align*}
	Now, consider the corresponding inequality from Proposition~\ref{prop:scndprox} for $\varphi = H_x (x)- f'(x)$, $H = H_x$ and $\xi \coloneqq x_+(\omega)$ given by
	\begin{align*}
	\big[ H_x(x) - f'(x) - H_x(x_+)\big] (x_+(\omega) - x_+) \leq g(x_+(\omega)) - g(x_+) - \frac{\kzwei}{2}\norm{x_+(\omega) - x_+}{X}^2
	\end{align*}
	which can be rearranged to a more useful form via
	\begin{align} \label{eq:stepconv1}
	\big[ H_x(\Delta x) + f'(x) \big](\Delta x - \Delta x(\omega)) \leq g(x_+(\omega)) - g(x_+) - \frac{\kzwei}{2}\norm{\Delta x - \Delta x(\omega)}{X}^2 \, .
	\end{align}
	For the damped update step we want to consider a different form than in \eqref{eq:dampprox} and attribute the additional norm term $\frac{\omega}{2}\norm{\cdot}{X}^2$ to the lower argument function $g$. This results in the proximal representation
	\begin{align*}
	x_+(\omega) = \p_{g +\frac{\omega}{2}\norm{\cdot}{X}^2 }^{H_x} \big(H_x (x) + \omega \mathcal R x - f'(x)\big) \, .
	\end{align*}
	The deduction of the respective inequality induced by the first order conditions of the proximal subproblem will turn out to be slightly more complicated. We use $H = H_x$ and $\varphi = H_x (x) + \omega \mathcal R x - f'(x)$ together with $\xi = x_+$ in Proposition~\ref{prop:scndprox}. Note here that the lower argument function $g +\frac{\omega}{2}\norm{\cdot}{X}^2$ satisfies \eqref{eq:kappa2} with constant $\kzwei + \omega$. Thus, we obtain
	\begin{align} \label{eq:stepconv2start}
	\begin{split}
	\big[ - H_x \big(&\Delta x (\omega)\big) + \omega \mathcal{R} x - f'(x) \big]\big(\Delta x - \Delta x(\omega) \big) \\
	&\leq g(x_+) - g(x_+(\omega)) + \frac{\omega}{2}\big( \norm{x_+}{X}^2 - \norm{x_+(\omega)}{X}^2 \big) - \frac{\kzwei + \omega}{2}\norm{\Delta x - \Delta x(\omega)}{X}^2 \, .
	\end{split}
	\end{align}
	We bring the Riesz-term $\omega \mathcal{R} \big(x , \Delta x - \Delta x(\omega) \big)$ to the right-hand side of \eqref{eq:stepconv2start} and recognize
	\begin{align*}
	\norm{x_+}{X}^2 - \norm{x_+(\omega)}{X}^2 - 2\mathcal{R} \big(x , \Delta x - \Delta x(\omega) \big) = \norm{\Delta x}{X}^2 - \norm{\Delta x (\omega)}{X}^2
	\end{align*}
	which results in
	\begin{align} \label{eq:stepconv2mitte}
	\begin{split}
	\big[ - H_x \big(&\Delta x (\omega)\big) - f'(x) \big]\big(\Delta x - \Delta x(\omega) \big) \\
	&\leq g(x_+) - g(x_+(\omega)) + \frac{\omega}{2}\big( \norm{\Delta x}{X}^2 - \norm{\Delta x(\omega)}{X}^2 \big) - \frac{\kzwei + \omega}{2}\norm{\Delta x - \Delta x(\omega)}{X}^2 \, .
	\end{split}
	\end{align}
	This inequality will be of importance once more later on. For now, we estimate the term
	\begin{align*}
	\norm{\Delta x}{X}^2 &- \norm{\Delta x(\omega)}{X}^2 - \norm{\Delta x - \Delta x(\omega)}{X}^2 \\
	&= \big( \norm{\Delta x}{X} + \norm{\Delta x(\omega)}{X} \big)\big( \norm{\Delta x}{X} - \norm{\Delta x(\omega)}{X} \big) - \norm{\Delta x - \Delta x(\omega)}{X}^2 \\
	&\leq \norm{\Delta x - \Delta x(\omega)}{X}\big( \norm{\Delta x}{X} + \norm{\Delta x(\omega)}{X} - \norm{\Delta x - \Delta x(\omega)}{X} \big) \\
	&\leq 2\norm{\Delta x(\omega)}{X}\norm{\Delta x - \Delta x(\omega)}{X}
	\end{align*}
	such that \eqref{eq:stepconv2mitte} takes the form
	\begin{align} \label{eq:stepconv2}
	\begin{split}
	\big[ - H_x \big(&\Delta x (\omega)\big) - f'(x) \big]\big(\Delta x - \Delta x(\omega) \big) \\
	&\leq g(x_+) - g(x_+(\omega)) + \omega \norm{\Delta x(\omega)}{X}\norm{\Delta x - \Delta x(\omega)}{X} - \frac{\kzwei}{2}\norm{\Delta x - \Delta x(\omega)}{X}^2 \, .
	\end{split}
	\end{align}
	Now, we add \eqref{eq:stepconv1} and \eqref{eq:stepconv2} which yields
	\begin{align*}
	H_x \big(\Delta x - \Delta x(\omega) \big)^2 \leq \omega \norm{\Delta x(\omega)}{X}\norm{\Delta x - \Delta x(\omega)}{X} - \kzwei \norm{\Delta x - \Delta x (\omega)}{X}^2 \, .
	\end{align*}
	Here we can use assumption \eqref{eq:kappa1} on $H_x$ and rearrange the resulting estimate to
	\begin{align*}
	\norm{\Delta x - \Delta x(\omega)}{X}^2 \leq \frac \omega {\ksum} \norm{\Delta x(\omega)}{X}\norm{\Delta x - \Delta x(\omega)}{X} \, .
	\end{align*}
	This is exactly the first asserted inequality \eqref{eq:steprelations1} if we divide by $\norm{\Delta x - \Delta x(\omega)}{X}$ which we can assume to be non-zero without loss of generality. From here, we can directly deduce the second part of \eqref{eq:steprelations2} since we can take advantage of \eqref{eq:steprelations1} by
	\begin{align*}
	\norm{\Delta x}{X} - \norm{\Delta x (\omega)}{X} \leq \norm{\Delta x - \Delta x(\omega)}{X} \leq \frac \omega {\ksum} \norm{\Delta x(\omega)}{X} \, .
	\end{align*}
	The first part of \eqref{eq:steprelations2} on the other hand requires some more consideration. We start at \eqref{eq:stepconv2mitte} but now take another route and directly add it to \eqref{eq:stepconv1} which yields
	\begin{align*}
	H_x\big(\Delta x - \Delta x(\omega) \big)^2 + \big( \kappa_2 + \frac \omega 2 \big)\norm{\Delta x - \Delta x(\omega)}{X}^2 \leq \frac \omega 2 \big( \norm{\Delta x}{X}^2 - \norm{\Delta x(\omega)}{X}^2 \big)
	\end{align*}
	and thereby 
	\begin{align} \label{eq:stepconv3}
	0 \leq \big( \keins + \kzwei + \frac \omega 2 \big)\norm{\Delta x - \Delta x(\omega)}{X}^2 \leq \frac \omega 2 \big( \norm{\Delta x}{X}^2 - \norm{\Delta x(\omega)}{X}^2 \big)
	\end{align}
	as we use \eqref{eq:kappa1} for $H_x$. All prefactors in \eqref{eq:stepconv3} are positive due to our assumptions such that the first part of \eqref{eq:steprelations2} follows. This completes the proof. \qed
\end{proof}
The equivalence result for damped and undamped update steps in the form of \eqref{eq:steprelations2} enables the proof of the following Corollary which will turn out to be useful for the admissibility of damped steps close to optimal solutions.
\begin{corollary} \label{cor:orelations}
	Close to an optimal solution $x_*$ of \eqref{eq:prob} we can find constants $c_1 , c_2 > 0$ such that the following estimates hold:
	\begin{align*}
	\norm{x_+(\omega) - x_*}{X} \leq c_1 \norm{x-x_*}{X} \, , \, \norm{x-x_*}{X} \leq c_2 \norm{\Delta x (\omega)}{X}
	\end{align*}
\end{corollary}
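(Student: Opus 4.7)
The plan is to leverage the fast local convergence of undamped steps (Theorem~\ref{thm:localacc}) together with the equivalence between damped and undamped steps provided by Lemma~\ref{lem:steprelations}. Throughout I would assume that $\omega$ stays bounded uniformly from above by some $\bar\omega<\infty$, which is guaranteed by the globalization scheme via Lemma~\ref{lem:sufflarge} together with the convention of only increasing $\omega_k$ by a fixed factor $\rho>1$. Without such a bound the constants $c_1,c_2$ would degenerate with $\omega$.

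For the first estimate I would insert the undamped iterate $x_+ = x + \Delta x$ and apply the triangle inequality:
\[
\norm{x_+(\omega) - x_*}{X} \le \norm{x_+ - x_*}{X} + \norm{\Delta x - \Delta x(\omega)}{X}.
\]
The first term is $o(\norm{x-x_*}{X})$ by Theorem~\ref{thm:localacc}. For the second term, inequality \eqref{eq:steprelations1} together with the upper half of \eqref{eq:steprelations2} bounds it by $\tfrac{\bar\omega}{\ksum}\norm{\Delta x}{X}$, and a further triangle inequality yields
\[
\norm{\Delta x}{X} \le \norm{x_+ - x_*}{X} + \norm{x-x_*}{X} = \bigl(1+o(1)\bigr)\norm{x-x_*}{X}.
\]
Combining both contributions produces $\norm{x_+(\omega)-x_*}{X} \le \bigl(\tfrac{\bar\omega}{\ksum} + o(1)\bigr)\norm{x-x_*}{X}$, so that choosing $x$ close enough to $x_*$ allows $c_1 = \tfrac{\bar\omega}{\ksum} + 1$ (say).

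For the second estimate I would argue analogously: since $x_+ - x = \Delta x$,
\[
\norm{x - x_*}{X} \le \norm{\Delta x}{X} + \norm{x_+ - x_*}{X} = \norm{\Delta x}{X} + o(\norm{x-x_*}{X}).
\]
For $x$ sufficiently close to $x_*$, the remainder term can be absorbed into the left-hand side (making the $o(1)$-prefactor smaller than $\tfrac12$, for instance), giving $\norm{x-x_*}{X} \le 2\norm{\Delta x}{X}$. The upper half of \eqref{eq:steprelations2} then yields
\[
\norm{x-x_*}{X} \le 2\bigl(\tfrac{\bar\omega}{\ksum} + 1\bigr)\norm{\Delta x(\omega)}{X},
\]
i.e., the second estimate with $c_2 = 2\bigl(\tfrac{\bar\omega}{\ksum}+1\bigr)$.

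The result itself is not deep, but its careful formulation hides the only real obstacle: the $o(\norm{x-x_*}{X})$ statement in Theorem~\ref{thm:localacc} is purely qualitative, so that the precise size of the neighborhood on which the corollary holds is determined implicitly by how fast this remainder shrinks, and by the uniform upper bound $\bar\omega$ on the damping parameter. Both ingredients must be in place simultaneously for the constants $c_1,c_2$ to be genuine constants, and this matches precisely the setting that will later be used to establish admissibility of damped and undamped steps near $x_*$.
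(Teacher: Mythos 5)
Your proof is correct and follows essentially the same route as the paper: both estimates rest on Theorem~\ref{thm:localacc} combined with Lemma~\ref{lem:steprelations} and the boundedness of $\omega$, with the second estimate obtained by the identical absorption argument. The only cosmetic difference is that for the first estimate you split off $\norm{\Delta x - \Delta x(\omega)}{X}$ via \eqref{eq:steprelations1}, while the paper uses $\norm{\Delta x(\omega)}{X}\leq\norm{\Delta x}{X}$ directly and thereby gets a $c_1$ independent of the bound on $\omega$; both are valid.
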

\begin{proof}
	For the deduction of both asserted inequalities, we will take advantage of the local superlinear convergence stated in Theorem~\ref{thm:localacc}, i.e., $\norm{x_+ - x_*}{X} = o\big( \norm{x-x_*}{X} \big)$ in the limit of $x \to x_*$. Consequently, we can write
	\begin{align} \label{eq:omittelspsi}
	\norm{x_+ - x_*}{X} = \psi \big( \norm{x-x_*}{X} \big) \norm{x-x_*}{X}
	\end{align}
	for some function $\psi:[0,\infty[ \to [0,\infty[$ with $\psi(t) \to 0$ for $t \to 0$. With this helpful representation at hand, we estimate
	\begin{align*}
	\norm{x_+(\omega) - x_*}{X} &\leq \norm{x-x_*}{X} + \norm{\Delta x (\omega)}{X} \leq \norm{x-x_*}{X} + \norm{\Delta x}{X} \\
	&\leq 2\norm{x-x_*}{X} + \norm{x_+ - x_*}{X} = \big[ 2 + \psi \big( \norm{x-x_*}{X} \big) \big] \norm{x-x_*}{X} \, .
	\end{align*}
	By the definition of $\psi$ above, this directly implies the first asserted inequality. We can deduce the second one similarly quickly via
	\begin{align*}
	\norm{x-x_*}{X} \leq \norm{x_+ - x_*}{X} + \norm{\Delta x}{X} = \psi \big( \norm{x-x_*}{X} \big) \norm{x-x_*}{X} + \norm{\Delta x}{X} \, .
	\end{align*}
	We can assume $\psi \big( \norm{x-x_*}{X} \big) < 1$ close to the optimal solution $x_*$ and thereby deduce
	\begin{align*}
	\norm{x-x_*}{X} &\leq \big[ 1 -\psi \big( \norm{x-x_*}{X} \big)  \big]^{-1} \norm{\Delta x}{X} \\
	&\leq \big[ 1 -\psi \big( \norm{x-x_*}{X} \big)  \big]^{-1} \big( \frac{\omega}{\ksum} + 1 \big) \norm{\Delta x(\omega)}{X}
	\end{align*}
	with the additional help of \eqref{eq:steprelations2}. Taking into account that $\omega$ remains bounded completes the proof of the second asserted inequality. \qed
\end{proof}
Now we are in the position to prove the admissibility of both undamped and damped steps close to optimal solutions of the composite minimization problem \eqref{eq:prob}. We will see that undamped steps will generally be admissible whereas for the admissibility of damped steps we will have to assume an additional property of the second order model bilinear forms $H_x$.
\begin{proposition} \label{prop:dampedstepadmiss}
	Let $x_* \in X$ be an optimal solution of \eqref{eq:prob} and let $H_x \in \partial_N f'(x)$ suffice \eqref{eq:kappa1} as well as $g$ suffice \eqref{eq:kappa2} with $\ksum > 0$ in a neighborhood of $x_*$. Additionally, suppose that \eqref{eq:sos} holds for $f$ as well as \eqref{eq:semismooth} holds for $f'$ at $x_*$. 
	
	Steps as in \eqref{eq:dampedstep} for any $\omega \ge 0$ are admissible for sufficient descent according to \eqref{eq:lambda} for any $\gamma < 1$ if the second order bilinear forms $H_x$ satisfy a bound of the form
	\begin{align} \label{eq:Hcontass}
	(H_{x_+(\omega)}-H_x) (x_+(\omega)-x_*)^2 = o\big( \norm{x-x_*}{X}^2 \big) \, \mbox{ for } x \to x_*.
	\end{align}
	
	In particular:
	\begin{itemize}
		\item[i)] 	full steps $\Delta x$ as defined in \eqref{eq:fullstep} are eventually admissible.
		\item[ii)] 	if the mapping $x \mapsto H_x$ is continuous at $x=x_*$, then eventually all steps are admissible. 
	\end{itemize}

\end{proposition}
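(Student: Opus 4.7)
The plan is to verify the sufficient-decrease condition \eqref{eq:lambda} directly by showing that the residual
\[
R(x,\omega) \coloneqq F(x_+(\omega)) - F(x) - \lambda_\omega(\Delta x(\omega))
\]
is $o(\norm{\Delta x(\omega)}{X}^2)$ as $x\to x_*$. Combined with the lower bound \eqref{eq:lambdanorm}, which reads $-\lambda_\omega(\Delta x(\omega))\geq \tfrac12(\omega+\ksum)\norm{\Delta x(\omega)}{X}^2$, any such residual will eventually be dominated by $(1-\gamma)\bigl(-\lambda_\omega(\Delta x(\omega))\bigr)$ for an arbitrary $\gamma<1$, yielding \eqref{eq:lambda}. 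Corollary~\ref{cor:orelations} is what lets us pass between $\norm{x-x_*}{X}$ and $\norm{\Delta x(\omega)}{X}$, and it also ensures $x_+(\omega)\to x_*$ whenever $x\to x_*$.

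To compute $R(x,\omega)$ explicitly, I note that the $g$-terms cancel between $F(x_+(\omega))-F(x)$ and $\lambda_\omega(\Delta x(\omega))$, so only the smooth contribution needs expansion. I would apply the second order semi-smoothness \eqref{eq:sos} at the base point $x_*$ both with $\xi=x-x_*$ and with $\xi=x_+(\omega)-x_*$ (using Corollary~\ref{cor:orelations} to rewrite the second remainder as $o(\norm{x-x_*}{X}^2)$), and then subtract. The resulting linear term $(f'(x_*)-f'(x))\Delta x(\omega)$ is handled by the semi-smoothness of $f'$ from \eqref{eq:semismooth}, which replaces it by $-H_x(x-x_*,\Delta x(\omega))$ modulo $o(\norm{x-x_*}{X}^2)$. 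A short algebraic manipulation using the splitting $H_{x_+(\omega)}=H_x+(H_{x_+(\omega)}-H_x)$ together with the polarization identity $\tfrac12 H_x(a+b)^2 = \tfrac12 H_x(a)^2+H_x(a,b)+\tfrac12 H_x(b)^2$ for $a=x-x_*$, $b=\Delta x(\omega)$ then makes the mixed $H_x(a,b)$-term cancel the one produced by semi-smoothness, the $\tfrac12 H_x(a)^2$-pieces annihilate each other, and $\tfrac12 H_x(\Delta x(\omega))^2$ lines up with the quadratic model piece of $\lambda_\omega$. What survives is
\[
R(x,\omega) = \tfrac12(H_{x_+(\omega)}-H_x)(x_+(\omega)-x_*)^2 - \tfrac{\omega}{2}\norm{\Delta x(\omega)}{X}^2 + o(\norm{x-x_*}{X}^2).
\]

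Assumption \eqref{eq:Hcontass} promotes the first summand to $o(\norm{x-x_*}{X}^2)$, the $-\tfrac{\omega}{2}\norm{\Delta x(\omega)}{X}^2$-term is non-positive and therefore harmless, and Corollary~\ref{cor:orelations} converts $o(\norm{x-x_*}{X}^2)$ into $o(\norm{\Delta x(\omega)}{X}^2)$, which yields the main claim. For part (i) with $\omega=0$, Theorem~\ref{thm:localacc} gives $\norm{x_+-x_*}{X}=o(\norm{x-x_*}{X})$, and the uniform bound $\norm{H_{x_k}}{\Lin}\leq M$ makes $\norm{H_{x_+}-H_x}{\Lin}\leq 2M$, so \eqref{eq:Hcontass} follows at once. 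For part (ii), continuity of $x\mapsto H_x$ at $x_*$ combined with $x_+(\omega)\to x_*$ (again via Corollary~\ref{cor:orelations}) gives $\norm{H_{x_+(\omega)}-H_x}{\Lin}\to 0$, while $\norm{x_+(\omega)-x_*}{X}^2\leq c_1^2\norm{x-x_*}{X}^2$ handles the quadratic factor, so \eqref{eq:Hcontass} holds uniformly in $\omega$.

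The main obstacle will be the bookkeeping in the expansion step: identifying that the linear correction provided by semi-smoothness of $f'$ exactly cancels the mixed quadratic term $H_x(x-x_*,\Delta x(\omega))$ extracted from $\tfrac12 H_{x_+(\omega)}(x_+(\omega)-x_*)^2$, so that the residual collapses to the genuinely small quantity controlled by hypothesis \eqref{eq:Hcontass}. Everything else is a routine application of results already established.
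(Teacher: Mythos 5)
Your proposal is correct and follows essentially the same route as the paper's proof: the telescoping expansion of $f(x_+(\omega))-f(x)$ via two applications of \eqref{eq:sos} at $x_*$, the cancellation of the mixed term $H_x(x-x_*,\Delta x(\omega))$ against the correction supplied by \eqref{eq:semismooth}, the isolation of the critical term $\tfrac12(H_{x_+(\omega)}-H_x)(x_+(\omega)-x_*)^2$, and the use of Corollary~\ref{cor:orelations} together with \eqref{eq:lambdanorm} to compare the residual with $(1-\gamma)\bigl(-\lambda_\omega(\Delta x(\omega))\bigr)$ all match the paper, as do your verifications of i) and ii). The only cosmetic difference is that you bound the residual $R(x,\omega)$ from above while the paper bounds the decrease ratio $\gamma(x,\omega)$ from below; these are equivalent.
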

\begin{proof}
	Let us take a look at the descent in the composite objective function $F$ when performing an update step and see which estimates we can deduce with the help of the assumptions and results preceding this proposition. 
	
	We will denote the update by $\Delta x (\omega)$ or $x_+(\omega) = x + \Delta x (\omega)$ respectively for some arbitrary $\omega \geq 0$ such that the notation comprises both the damped and undamped case for the update step. Now, we write 
	\begin{align*}
	F(x+\Delta x(\omega)) - F(x) = f(x+\Delta x(\omega)) - f(x) + g(x+\Delta x(\omega)) - g(x)
	\end{align*}
	and estimate the descent in the smooth part of the objective function $f(x+\Delta x(\omega)) - f(x)$. By telescoping we obtain the following identity:
	\begin{align}
	&f(x_+(\omega)) - f(x) - f'(x)\Delta x(\omega) - \frac 12 H_x (\Delta x(\omega))^2 \nonumber \\
	&= f(x_+(\omega))-f(x_*)-f'(x_*)(x_+(\omega)-x_*)-\frac12 H_{x_+(\omega)}(x_+(\omega)-x_*)^2 \nonumber \\
	&\quad+ f(x_*)+f'(x_*)(x- x_*) + \frac12 (H_{x_+(\omega)}-H_x) (x_+(\omega)-x_*)^2 \nonumber \\
	&\quad- f(x)  - H_x (\Delta x(\omega))^2+\frac12 (H_x(x_*-x_+(\omega))^2+ H_x (\Delta x(\omega))^2) \nonumber \\
	&\quad-f'(x)\Delta x(\omega)+f'(x_*)\Delta x(\omega)+H_x(x-x_*,\Delta x(\omega)) \nonumber \\
	&\quad +H_x(x_*-x_+(\omega)+\Delta x(\omega) ,\Delta x(\omega))\nonumber \\
	&=\left[f(x_+(\omega))-f(x_*)-f'(x_*)(x_+(\omega)-x_*)-\frac12 H_{x_+(\omega)}(x_+(\omega)-x_*)^2\right]\nonumber \\
	&\quad -\left[f(x) - f(x_*) - f'(x_*)(x-x_*) - \frac12 H_x(x-x_*)^2\right] \nonumber \\
	&\quad -\Big[(f'(x)-f'(x_*))\Delta x(\omega)-H_x(x-x_*,\Delta x(\omega))\Big]+ \frac12(H_{x_+(\omega)}-H_x) (x_+(\omega)-x_*)^2\nonumber\\
	\begin{split} \label{eq:letzteos}
	&= o(\|x_+(\omega)-x_*\|^2)+o(\|x-x_*\|_X^2)+o(\|x-x_*\|_X)\|\Delta x(\omega)\|_X \\
	&\quad + \frac12(H_{x_+(\omega)}-H_x) (x_+(\omega)-x_*)^2.
	\end{split}
	\end{align}
	In the last step we used second order semi-smoothness of $f$ and semi-smoothness of $f'$ at $x_*$. 
	
	We observe that the only critical term is 
	\[
	\rho \coloneqq \frac12(H_{x_+(\omega)}-H_x) (x_+(\omega)-x_*)^2. 
	\]
	We conclude
	\begin{align*}
	f(x+\Delta x(\omega)) - f(x) = f'(x)\Delta x (\omega) + \frac 12 H_x\big(\Delta x(\omega)\big)^2 + \rho + o\big(\norm{\Delta x (\omega)}{X}^2\big)
	\end{align*}
	by Corollary~\ref{cor:orelations} and then directly deduce
	\begin{align*}
	F(x_+(\omega)) - F(x) = \lambda_\omega(\Delta x(\omega))-\frac{\omega}{2}\|\Delta x(\omega)\|^2_X +\rho+o(\|\Delta x(\omega)\|_X^2) \, .
	\end{align*}
	Now, we have to consider an estimate for the critical term $\rho$ defined as above. We can define a prefactor function $\gamma: X \times [0,\infty[ \to \RR$ for the admissibility criterion \eqref{eq:lambda} by
	\begin{align*}
	\gamma(x,\omega) &\coloneqq \frac{F(x_+(\omega)) - F(x)}{\lambda_\omega(\Delta x(\omega))}\\
	&= 1+\frac{-\frac{\omega}{2}\|\Delta x(\omega)\|^2_X 
		+\rho+o(\|\Delta x(\omega)\|_X^2) }{\lambda_\omega(\Delta x(\omega))}\\
	&=1+\frac{\frac{\omega}{2}\|\Delta x(\omega)\|^2_X 
		+o(\|\Delta x(\omega)\|_X^2)-\rho}{ |\lambda_\omega(\Delta x(\omega))|}
	\end{align*}
	which should be larger than some $\tilde \gamma \in ]0,1[$. 
	We may assume that the numerator of the latter expression is non-positive, otherwise this inequality is trivially fulfilled. Thus, by decreasing the positive denominator via \eqref{eq:lambdanorm} we obtain that for any $\varepsilon>0$ there is a neigbourhood of $x^*$, such that for any iterate $x$ in this neighbourhood
	\begin{align*}
	\gamma(x,\omega) &\ge 1+\frac{\frac{\omega}{2}\|\Delta x(\omega)\|^2_X 
		-\rho+o(\|\Delta x(\omega)\|_X^2)}{\frac12(\omega+\ksum)\|\Delta x(\omega)\|_X^2}\\
	&=1+\frac{\omega}{\omega+\ksum}-\frac{(H_{x_+(\omega)}-H_x) (x_+(\omega)-x_*)^2}{(\omega+\ksum)\|\Delta x(\omega)\|_X^2}-\varepsilon
	\end{align*}
	where the latter $\varepsilon$-term arises from $o(\|\Delta x(\omega)\|_X^2)/\|\Delta x(\omega)\|_X^2$ and can be chosen arbitrarily small for $\|\Delta x(\omega)\|_X \to 0$ which holds by the estimate
	\begin{align*}
	\norm{\Delta x (\omega)}{X} \leq \norm{\Delta x}{X} \leq \norm{x_+ - x_*}{X} + \norm{x-x_*}{X} \, .
	\end{align*}
	The $\rho$-term then vanishes by assumption \eqref{eq:Hcontass}, which is implied by $i)$ or $ii)$ in the following way:
	\begin{align*}
	i) \quad \Rightarrow \quad |(H_{x_+(\omega)}-&H_x) (x_+(\omega)-x_*)^2|= |(H_{x_+}-H_x) (x_+-x_*)^2|\\
	&\le (\|H_{x_+}\|+\|H_x\|)\|x_+-x_*\|_X^2 = o\big( \norm{x-x_*}{X}^2 \big) \\
	ii) \quad \Rightarrow \quad |(H_{x_+(\omega)}-&H_x) (x_+(\omega)-x_*)^2|\\
	&\le (\|H_{x_+(\omega)}-H_{x_*}\|+\|H_{x_*}-H_x\|)\|x_+(\omega)-x_*\|_X^2 \\
	&= o\big( \norm{x-x_*}{X}^2 \big).
	\end{align*}    \qed
\end{proof}
The seemingly paradoxical behavior that full Newton steps yield a better model approximation than damped Newton steps comes from the fact that $f'$ is
not Fr\'echet differentiable in general. The only prerequisite that we can take advantage of is \eqref{eq:sos} at fixed $x_*$. 

The continuity assumption $ii)$ on $H_x$ can be verified for superposition operators via Proposition~\ref{pro:cont}, it holds, for example, for $\max(0,t)^2$, if $x_*(\omega)=0$ only on a set of zero measure.   

\section{Numerical Results} \label{sec:num}
We consider the following problem on $\Omega = [0,1]^2\subset \mathbb R^2$: Find $u \in H^1_0(\Omega,\mathbb R)$ that minimizes the composite objective functional $F$ defined via 
\begin{align} \label{eq:toymodelfunc}
F(u) \coloneqq \int_\Omega \frac 12 \norm{\nabla u}{\RR^2}^2 + \alpha \max \{ \norm{\nabla u}{\RR^2} - 1, 0 \}^2 + \beta u^3 + c\,|u| + \rho \cdot u \, \de x \, .
\end{align}
with parameters $c > 0 $ and $\alpha,\beta \in \RR$ as well as a force field $\rho : \Omega \to \RR$. The norm $\norm{\cdot}{\RR^2}$ denotes the Euclidean 2-norm on $\RR^2$. In the sense of the theory of the preceding sections we can identify the smooth part of $F$ as $f:H^1_0(\Omega,\mathbb R) \to \RR$ given by
\begin{align*}
f(u) \coloneqq \int_\Omega \frac 12 \norm{\nabla u}{\RR^2}^2 + \alpha \max \{ \norm{\nabla u}{\RR^2} - 1, 0 \}^2 + \beta u^3 + \rho \cdot u \, \de x \, .
\end{align*}
We have to note here that $f$ technically does not satisfy the assumptions made on the smooth part of the composite objective functional specified above in the case $\alpha \neq 0$ due to the lack of semi-smoothness of the corresponding squared max-term. The use of the derivative $\nabla u$ instead of function values $u$ creates a norm-gap which cannot be, as usual,  compensated by Sobolev-embeddings and hinders the proof of semi-smoothness of the respective superposition operator. However, we think that slightly going beyond the framework of theoretical results for numerical investigations can be instructive. 

For our implementation of the solution algorithm we chose the force field $\rho$ to be constant on its domain and equal to some so called load-factor $\tilde \rho > 0$ which we will from now on refer to as simply $\rho$. Consequently, the non-smooth part of the objective functional $g$ only consists of the scaled integral over the absolute value term which apparently also satisfies the specifications made on $g$ before. Note that the underlying Hilbert space is given by $X = H^1_0(\Omega,\mathbb R)$ which also determines the norm choice for regularization of the subproblem.

In the following we will dive deeper into the specifics of our implementation of the algorithm: In order to differentiate the smooth part of the composite objective functional and create a second order model of it around some current iterate, we take advantage of the automatic differentiation software package adol-C, cf. \cite{Walther2012}. With the second order model at hand we can then consider subproblem \eqref{eq:dampedstep} which has to be solved in order to obtain a candidate for the update of the current iterate. For the latter endeavor we employ a so called Truncated Non-smooth Newton Multigrid Method with a direct linear solver. We can summarize this method as a mixture of exact, non-smooth Gauß-Seidel steps for each component and global truncated Newton steps enhanced with a line-search procedure. The scheme is analytically proven to converge for convex and coercitive problems; for a more detailed description of the algorithm and its convergence properties consider \cite{Graeser2018}. 

However, the most delicate issue concerning the implementation of our algorithm and its application to the problem described above is the choice of the regularization parameter $\omega \geq 0$ along the sequence of iterates $(x_k)\subset X$. For now, we want to confine ourselves to displaying the convergence properties of the class of Proximal Newton methods in the scenario presented above and not attach too much value to algorithmic technicalities. As a consequence, we took the rather heuristic approach of simply doubling $\omega$ in the case that the sufficient descent criterion $\eqref{eq:lambda}$ (for $\gamma = \frac 12$) is not satisfied by the current update step candidate and on the other hand multiplying $\omega$ by $\big(\frac 12\big)^{n}$ where $n \in \NN$ denotes the number of consecutive accepted update steps. The latter feature ensures that local fast convergence is recognized by the algorithm and the regularization parameter quickly decreases once the iterates come close to the minimizer. For the superlinear convergence demonstrated in Theorem~\ref{thm:localacc} to arise, undamped update steps have to be conducted, i.e., the regularization parameter has to be zero and not merely sufficiently small. For this reason we set $\omega = 0$ once it reaches a threshold value $\omega_0$ following the procedure described beforehand. On the contrary, if a full update step is not accepted by the sufficient descent criterion, we set $\omega = \omega_0$ and from there on proceed as usual. 

Even though the choice of $\omega$ considered here is rather heuristic and not problem-specific at all, it stands in perfect conformity with the theory established over the course of the previous sections and also successfully displays the global convergence and local acceleration of our Proximal Newton method for the model problem of minimizing \eqref{eq:toymodelfunc} over $H^1_0(\Omega,\RR)$. Moreover, we added a threshold value for the descent considering the modified quadratic model $\lambda_\omega \big(\Delta x (\omega)\big)$ as a stopping criterion for our algorithm, i.e., the computation stops as soon as we have $|\lambda_\omega \big(\Delta x (\omega)\big)| < 10^{-14}$ for an admissible step $\Delta x (\omega)$.

Figure~\ref{fig:normcorralpha6lvl} constitutes a logarithmic plot of correction norms $\norm{\Delta x_k (\omega_k)}{H^1(\Omega)}$ for constant values of $c = 80$, $\beta = 40$ and $\rho = -100$ while $\alpha$ is increased from $0$ to $240$ in equidistant steps of $40$. Quite predictably from the structure of the functional, increasing values of $\alpha$ make the minimization problem more and more difficult to solve for our method but eventually the local superlinear convergence is evident also for larger values of $\alpha$. Figure~\ref{fig:usedregalpha6lvl} shows the corresponding values of the regularization parameter $\omega$ which were used along the accepted steps on the way to the minimizer. 

Apart from those considerations, it is always very insightful to compare the performance of our algorithm with other existing methods for similar problems to the one introduced in \eqref{eq:toymodelfunc}. To this end, we considered two alternatives: Firstly, we used a simple Proximal Gradient procedure with $H^1$-regularization by ignoring the second order bilinear form $H_x$ in the update step subproblem \eqref{eq:dampedstep} and secondly, we took advantage of acceleration strategies for such Proximal Gradient methods by implementing the FISTA-algorithm as presented in \cite{Scheinberg2014}. In Figures~\ref{fig:normcorrproxgrad} and \ref{fig:normcorrfista}, the norms of update steps are plotted for both variants for solving the same problem as above, i.e., $c = 80$, $\beta = 40$ and $\rho = -100$ while $\alpha$ we increase in equidistant steps of $40$ from $0$ to $160$. We recognize a clear difference in performance in the transition both from Proximal Gradient to FISTA and from FISTA to Proximal Newton across all $\alpha$-variations of the considered toy problem. Even in the rather mild case of $\alpha = 0$ Proximal Gradient takes $N = 5326$ and FISTA takes $N = 2498$ iterations to reach the minimizer. Note that in this case we only used four uniform grid refinements due to the very high computational effort of the simulations which does not diminish the qualitative significance of our observations.

Furthermore, Table~\ref{tab:totaliterations} displays the total number of iterations required in order to reach the minimizer of \eqref{eq:prob} considering different grid sizes for the discretization of the objective function for the values of the prefactor $\alpha$ investigated beforehand. 
In the case $\alpha > 0$ we observe some moderate increase in iteration numbers, which is attributed to the presence of a norm-gap in the corresponding term. 

\begin{figure}[h]
	\centering
	\begin{subfigure}[b]{0.45\textwidth}
		\begin{center}
			\resizebox{\textwidth}{!}{\input{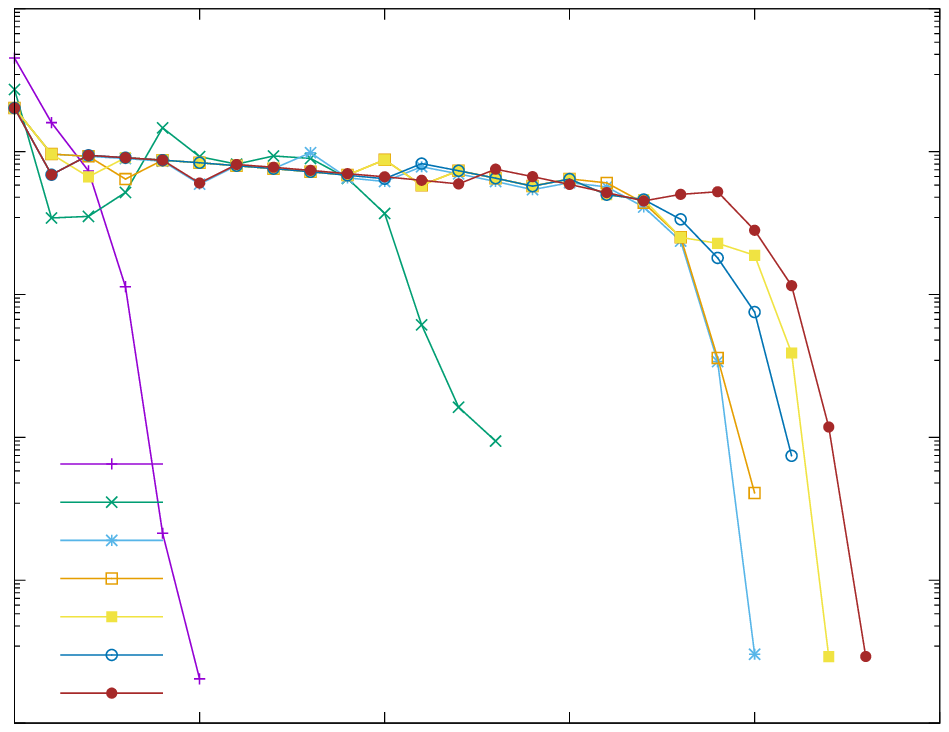}}
		\end{center}
		\caption{Correction norms $\norm{\Delta x_k (\omega_k)}{H^1(\Omega)}$}
		\label{fig:normcorralpha6lvl}
	\end{subfigure}
	\hfill
	\begin{subfigure}[b]{0.45\textwidth}
		\begin{center}
			\resizebox{\textwidth}{!}{\input{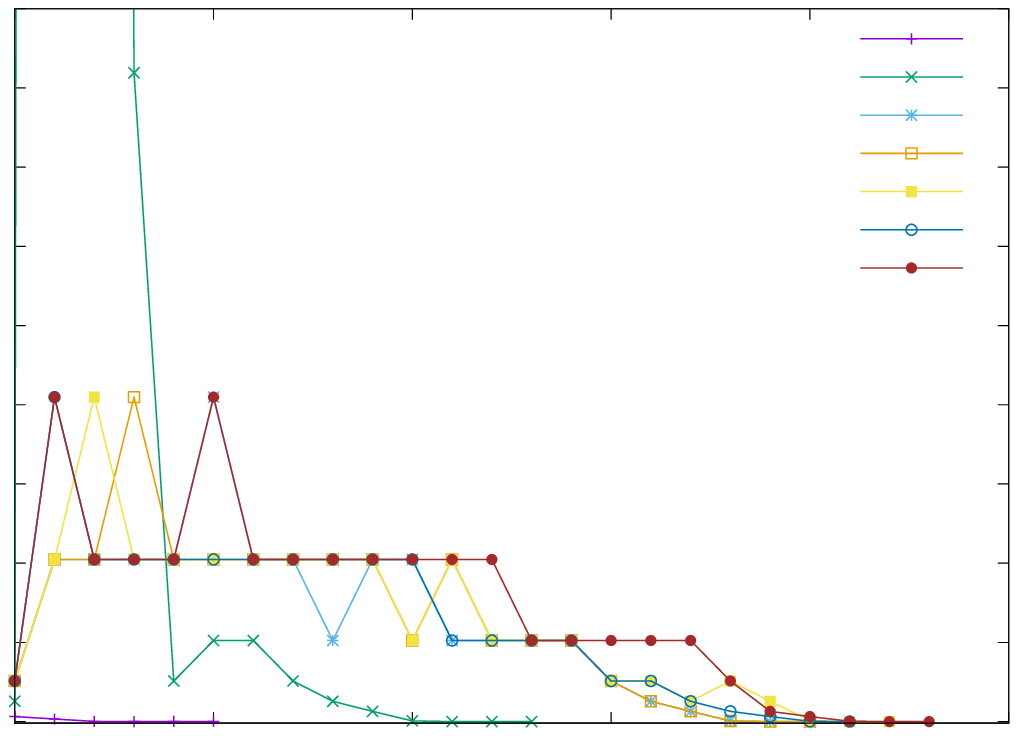}}
		\end{center}
		\caption{Regularization parameters $\omega_k$}
		\label{fig:usedregalpha6lvl}
	\end{subfigure}
	\caption{Graphs of correction norms and employed regularization parameters for $c = 80$, $\beta = 40$, $\rho = -100$ and $\alpha \in \{0,40,80,120,160,200,240\}$ for the Proximal Newton method with six uniform grid refinements.}
\end{figure}

\begin{figure}[h]
	\centering
	\begin{subfigure}[b]{0.45\textwidth}
		\begin{center}
			\resizebox{\textwidth}{!}{\input{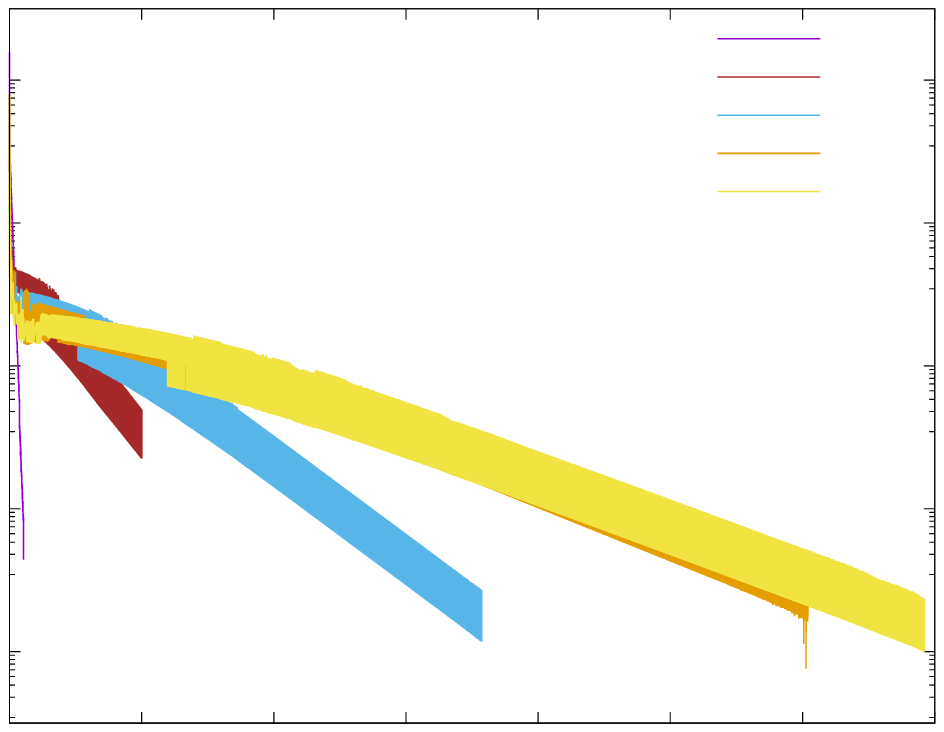}}
		\end{center}
		\caption{Proximal Gradient method}
		\label{fig:normcorrproxgrad}
	\end{subfigure}
	\hfill
	\begin{subfigure}[b]{0.45\textwidth}
		\begin{center}
			\resizebox{\textwidth}{!}{\input{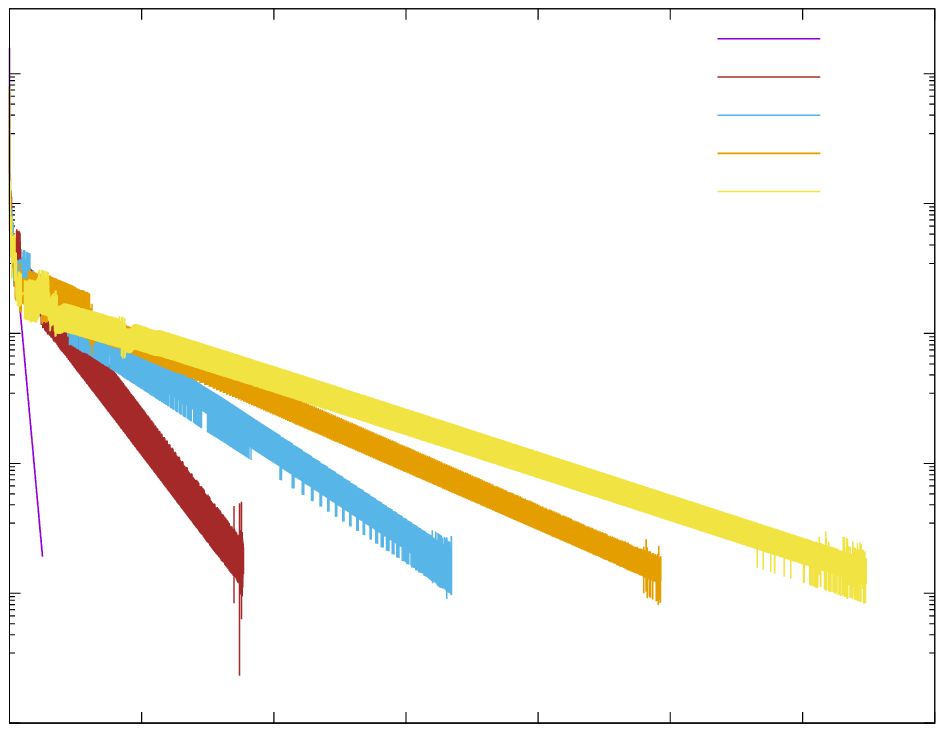}}
		\end{center}
		\caption{FISTA method}
		\label{fig:normcorrfista}
	\end{subfigure}
	\caption{Graphs of correction norms for Proximal Gradient and FISTA with $c = 80$, $\beta = 40$, $\rho = -100$ and $\alpha \in \{0,40,80,120,160\}$ with four uniform grid refinements.}
\end{figure}

%

\begin{table}[h]
	\begin{center}
		\begin{tabular}{| c || c | c | c | c | c | c | c |}
			\hline
			\xrowht{20pt} \diagbox{$h$}{$\alpha$} & 0 & 40 & 80 & 120 & 160 & 200 & 240 \\
			\hline \hline
			\xrowht{20pt} $2^{-4}$ & 5 & 9 & 9 & 12 & 13 & 13 & 14 \\
			\hline
			\xrowht{20pt} $2^{-5}$ & 5 & 11 & 15 & 20 & 19 & 22 & 25  \\
			\hline
			\xrowht{20pt} $2^{-6}$ & 6 & 14 & 21 & 21 & 23 & 22 & 24  \\
			\hline
			\xrowht{20pt} $2^{-7}$ & 5 & 19 & 21 & 28 & 28 & 32 & 38  \\
			\hline
			\xrowht{20pt} $2^{-8}$ & 7 & 25 & 26 & 28 & 30 & 37 & 39  \\
			\hline
		\end{tabular}
	\end{center}
	\caption{Number of total iterations $N$ for different grid sizes $h$ and prefactor values $\alpha$ for fixed parameters $\beta=40$ and $c=80$.}
	\label{tab:totaliterations}
\end{table}

\section{Conclusion} \label{sec:conc}

Now that we have sufficiently displayed the global and local convergence properties of our Proximal Newton method, it is time to both reflect on what we have achieved here as well as discuss some possible improvements on the algorithm and its implementation which are a topic of future research:

We have developed a globally convergent and locally accelerated Proximal Newton method in a Hilbert space setting which demands neither second order differentiability of the smooth part nor convexity of either part of the composite objective function. Concerning differentiability, we have introduced the notion of second order semi-smoothness. Concerning non-convexity, our theoretical framework uses quantified information on lacking convexity instead of simply resorting to a different first order update scheme in the non-convex case. The globalization scheme takes advantage of a proximal arc search procedure and thereby establishes stationarity of all limit points of the sequence of iterates. Additional convexity close to optimal solutions of the original problem leads to local acceleration of our method which in particular does not rely on strong convexity of the smooth part, but only on the strong convexity of the composite functional thanks to a well-thought definition of proximal mappings within the theoretical framework. The application of our method to actual function space problems is enabled by using an efficient solver for the step computation subproblem, the Truncated Non-smooth Newton Multigrid Method. We have displayed global convergence and local acceleration of our algorithm by considering a toy model problem in function space.

As we have already mentioned beforehand, the choice of the regularization parameter we employed here is rather heuristic and not problem-specific at all. This issue can be addressed by using an estimate for the residual term of the quadratic model established in subproblem \eqref{eq:dampedstep}, as seen in \cite{Weiser2007} for adaptive affine conjugate Newton methods where non-convex but smooth minimization problems for nonlinear elastomechanics have been thoroughly investigated. The idea behind the procedure is to evaluate actual residual terms for formerly computed correction candidates and then use them as a regularization parameter for the computation of the next update step candidate.

Another focal concern of our future work is taking into account inexactness in the computation of update steps. Inexact solutions of subproblem \eqref{eq:dampedstep} are then required to at least satisfy certain inexactness criteria which still give access to similar global and local convergence properties of the ensuing algorithm as the exact version discussed throughout the present treatise. 

Additionally, these inexactness criteria should be sufficiently simple to evaluate since they have to be considered within every iteration of solving the subproblem for update step computation. However, the discussion of inexact Proximal Newton methods then opens up the possibility of considering more challenging real-world applications like energetic formulations of finite strain plasticity.

\bibliographystyle{spmpsci}
\bibliography{References.bib}
\end{document}